\def\gd{\mathrm{gd}}
\def\imm{\mathrm{imm}}
\def\stablespan{\mathrm{stablespan}}
\def\span{\mathrm{span}}
\def\cat{\mathrm{cat}}
\def\TC{\mathrm{TC}}
\def\n{\overline{n}}
\def\m{\overline{m}}
\def\x{\overline{x}}
\def\P{\mathrm{P}}
\def\Sq{\mathrm{Sq}}
\def\invariant{$\mathbb{Z}_t$-invariant }
\def\L{\mathbb{C}\P}
\def\cpnt{\L_{\overline{n}}(t)}
\def\cpmt{\L_{\overline{m}}(t)}
\newtheorem{theorem}{Theorem}[section]
\newtheorem{proposition}[theorem]{Proposition}
\newtheorem{lema}[theorem]{Lemma}
\newtheorem{definition}[theorem]{Definition}
\newtheorem{corolario}[theorem]{Corollary}
\newtheorem{nota}[theorem]{Remark}
\newtheorem{ejemplo}[theorem]{Example}
\newtheorem{examples}[theorem]{Examples }
\title{Complex-Projective and lens Product Spaces}
\author{Jes\'us Gonz\'alez and Maurilio Velasco}
\date{\today}
\begin{document}

\begin{abstract}
Let $t$ be a positive integer. Following work of D.~M.~Davis, we study the topology of complex-projective product spaces, i.e.~quotients of cartesian products of odd dimensional spheres by the diagonal $S^1$-action, and of the $t$-torsion lens product spaces, i.e.~the corresponding quotients when the action is restricted to the $t^{\mathrm{th}}$ roots of unity. For a commutative complex-oriented cohomology theory $h^*$, we determine the  $h^*$-cohomology ring of these spaces (in terms of the $t$-series for $h^*$, in the case of $t$-torsion lens product spaces). When $h^*$ is singular cohomology with mod~2 coefficients, we also determine the action of the Steenrod algebra. We show that these spaces break apart after a suspension as a wedge of desuspensions of usual stunted complex projective ($t$-torsion lens) spaces. We estimate the category and topological complexity of complex-projective and lens product spaces, showing in particular that these invariants are usually much lower than predicted by the usual dimensional bounds. We extend Davis' analysis of manifold properties such as immersion dimension, (stable-)span, and (stable-)parallelizability of real projective product spaces to the complex-projective and lens product cases.
\end{abstract}

\maketitle \tableofcontents

\section{Preliminaries}\label{Int}
The family of real and complex projective spaces provide a standard benchmark for testing questions and finding interesting phenomena in algebraic topology. This basic family of manifolds has now been extended in~\cite{DDa} to that of projective \emph{product} spaces. Yet, Davis considered only the real case. This paper should be thought of as a companion of~\cite{DDa} in that we extend Davis' viewpoint to the complex case---therefore completing the extended benchmark. In fact, the complex analysis is streamlined (and complemented) due, in part, to a unifying ``lens'' approach.

\medskip
For a positive integer $t$ and an $r$-tuple $\n=(n_1,\dots, n_r)$ of non-negative integers define
$$L_{\n}(t)=S^{2n_1+1}\times\cdots\times
S^{n_r+1}\left/\,(x_1,\dots,x_r)\thicksim(\lambda x_1,\dots,\lambda x_r)\right.$$ 
where $x_i \in S^{2n_i+1}$ and $\lambda \in\mathbb{Z}_t$. Here we think of $\mathbb{Z}_t$ as the $t^{\mathrm{th}}$ roots of unity in the unit circle $S^1 \subseteq \mathbb{C}$. This is a manifold of dimension $2(n_1+\cdots+ n_r)+r$ which we call a $t$-torsion lens product space. We might occasionally simplify the notation to $L_{\overline{n}}$ when the torsion $t$ is understood from the context. We also consider the case $t=\infty$, for which we agree to interpret $\mathbb{Z}_\infty$ as $S^1$. The resulting $(2(n_1+\cdots+n_r)+r-1)$-dimensional manifold is called a complex-projective product space, and is denoted by $\mathbb{C}\P_{\n}$. 

\medskip
Unless explicitly noted otherwise, throughout the paper we will assume $n_1\leq n_2\leq\cdots\leq n_r$. Also, for the sake of simplifying some statements, it will be convenient to unify notation by writing 
$$\mathbb{C}\P_{\n}(t)=\begin{cases}
\mathbb{C}\P_{\n}, & t=\infty.\\
L_{\n}(t), & \mbox{$t$ a positive integer.}
\end{cases}$$

\medskip
With $r=1$ these constructions yield standard complex projective spaces $\L^{n_1}=\L_{(n_1)}(\infty)$ and lens spaces $L^{2n_1+1}(t)=L_{(n_1)}(t)$ for finite $t$. Additionally, the special case with $t=2$ recovers those projective product spaces constructed in~\cite{DDa} from products of odd dimensional spheres. 

\medskip
For $0<t\leq\infty$ there is a complex line bundle $\gamma_{\n}(t)$ over $\L_{\n}(t)$ whose $k$-fold Whitney sum $k\,\gamma_{\n}(t)$ has total space
\begin{equation}\label{totalspace}
S^{2n_1+1}\times\cdots\times S^{2n_r+1}\times\mathbb{C}^k\left/\,(\x,z)\thicksim(\lambda\x,\lambda z)\right.
\end{equation}
where $\x \in S^{2n_1+1}\times\cdots\times S^{2n_r+1}$, $z\in \mathbb{C}^k$, and $\lambda \in \mathbb{Z}_{t}$. In particular
\begin{equation}\label{spherebundle}
S((k+1)\,\gamma_{\n}(t))= \L_{(n_1,\dots,n_r,k)}(t).
\end{equation}
Note that for $t'=\infty$, or else for $t$ dividing $t'$, the canonical projection $\pi=\pi_{t,t'}\colon\L_{\n}(t)\to\L_{\n}(t')$ satisfies 
\begin{equation}\label{pullbackttprima}
\pi^*(\gamma_{\n}(t'))=\gamma_{\n}(t).
\end{equation}
Likewise, if $\overline{m}$ is obtained from $\overline{n}$ by removal of some coordinates, then there is an obvious projection $p\colon\L_{\overline{n}}(t)\to\L_{\overline{m}}(t)$ satisfying 
\begin{equation}\label{pullbackttprimaprima}
p^*(\gamma_{\m}(t))=\gamma_{\n}(t).
\end{equation}
Note that if $\overline{m}=(n_{t_1},\ldots,n_{t_s})$ with $t_1=1$, then $p$ is a retraction with respect to the inclusion $j\colon\L_{\overline{m}}(t)\to\L_{\overline{n}}(t)$ induced by
$$
(x_{t_1},\ldots,x_{t_s})\mapsto(x_{t_1},\ldots,x_{t_1},x_{t_2},\ldots,x_{t_2},\ldots,x_{t_s},\ldots,x_{t_s})
$$
where each $x_{t_j}$ is repeated as needed so to account for the coordinates that have been removed from $\overline{n}$ in forming $\overline{m}$. In particular,~(\ref{pullbackttprimaprima}) yields
\begin{equation}\label{pullbackttprimaprimaprima}
j^*(\gamma_{\n}(t))=\gamma_{\m}(t).
\end{equation}
In view of~(\ref{pullbackttprima}) and~(\ref{pullbackttprimaprima}), the bundles $\gamma_{\n}(t)$ generalize the \emph{dual} Hopf bundles over usual complex projective spaces. Indeed, for $k=r=1$ and $t=\infty$, (\ref{totalspace})~describes the complex conjugate of the Borel construction for the canonical Hopf bundle over $\L^{n_1}$. In any case, for the purposes of the discussion in Section~\ref{mnfdppts}, it will be convenient to keep in mind that the realification $r\left(\gamma_{(n_1)}(t)\right)$ agrees with the realification of the corresponding Hopf bundle.

\section{Cohomology of complex-projective and lens product spaces}\label{Cohmly}

Let  $h$ be a commutative complex-oriented cohomology theory. Recall our assumption $n_1\leq\cdots\leq n_r$.
\begin{theorem}\label{additive}
For any $0<t\leq\infty$, there is an isomorphism of graded $h^*$-modules
\begin{equation}\label{ladescrip}
h^*(\L_{\n}(t))\cong h^*(\L_{(n_1)}(t))\otimes\Lambda_{h^*}[x_2,\dots,
x_r]
\end{equation} 
where $\dim(x_i) = 2n_i+1\hspace{.1mm}$ for $\hspace{.1mm}i=2,\dots r$. 
\end{theorem}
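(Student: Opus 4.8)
The plan is to induct on the number $r$ of factors. When $r=1$ both sides of~(\ref{ladescrip}) are literally $h^*(\L_{(n_1)}(t))$, so there is nothing to prove. For $r\geq 2$ set $\m=(n_1,\dots,n_{r-1})$. Applying~(\ref{spherebundle}) with the tuple $\m$ and with $k=n_r$ realizes $\L_{\n}(t)$ as the sphere bundle $S\bigl((n_r+1)\gamma_{\m}(t)\bigr)$ of a complex vector bundle $E:=(n_r+1)\gamma_{\m}(t)$ of complex rank $n_r+1$ over $\L_{\m}(t)$; in particular there is a fiber bundle $S^{2n_r+1}\hookrightarrow\L_{\n}(t)\xrightarrow{\ \pi\ }\L_{\m}(t)$. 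Since $E$ is complex it is $h$-orientable, so this bundle has a Thom class and a Gysin sequence. The goal is to deduce~(\ref{ladescrip}) for $\n$ from the inductive statement for $\m$ by showing that $h^*(\L_{\n}(t))$ is $h^*(\L_{\m}(t))$-free on two generators, $1$ and a class $x_r$ in degree $2n_r+1$.

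The crux is the vanishing of the $h$-theory Euler class $e(E)\in h^{2n_r+2}(\L_{\m}(t))$. Since $E$ is a sum of $n_r+1$ copies of the line bundle $\gamma_{\m}(t)$, the Whitney formula for Chern classes in the complex-oriented theory $h$ gives $e(E)=c_1(\gamma_{\m}(t))^{\,n_r+1}$. By~(\ref{pullbackttprimaprima}) the line bundle $\gamma_{\m}(t)$ is pulled back from $\L_{(n_1)}(t)$ along the projection forgetting the coordinates $n_2,\dots,n_{r-1}$, so it is enough to check $c_1(\gamma_{(n_1)}(t))^{\,n_r+1}=0$ in $h^*(\L_{(n_1)}(t))$; as $n_r+1\geq n_1+1$ — and this is precisely where the standing hypothesis $n_1\leq\cdots\leq n_r$ is used — it suffices to know $c_1(\gamma_{(n_1)}(t))^{\,n_1+1}=0$. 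For $t=\infty$ this is immediate, since $\L_{(n_1)}(\infty)=\L^{n_1}$, $\gamma_{(n_1)}(\infty)$ is a dual Hopf bundle, and $h^*(\L^{n_1})=h^*\bigl[c_1(\gamma_{(n_1)}(\infty))\bigr]/\bigl(c_1(\gamma_{(n_1)}(\infty))^{\,n_1+1}\bigr)$. For finite $t$ one uses the standard description of $\L_{(n_1)}(t)=L^{2n_1+1}(t)$ as the sphere bundle, over $B\mathbb{Z}_t$, of the $(n_1+1)$-fold sum of the canonical line bundle — under which $\gamma_{(n_1)}(t)$ is the pullback of that canonical line bundle, as one reads off from~(\ref{totalspace}) — and the Gysin sequence of that sphere bundle forces $c_1(\gamma_{(n_1)}(t))^{\,n_1+1}=0$.

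Granting $e(E)=0$, the argument finishes as follows. Write $D$ and $S=\L_{\n}(t)$ for the disk and sphere bundles of $E$, so that $D\simeq\L_{\m}(t)$, and let $U\in h^{2n_r+2}(D,S)$ be the Thom class. Its image in $h^{2n_r+2}(D)\cong h^{2n_r+2}(\L_{\m}(t))$ is $e(E)=0$, so by exactness of the cohomology sequence of the pair $(D,S)$ there is a class $x_r\in h^{2n_r+1}(\L_{\n}(t))$ with $\delta(x_r)=U$. By naturality of that sequence under the inclusion of a single fiber, and because the Thom class restricts on each fiber to the Thom class of $\mathbb{C}^{n_r+1}$, the class $x_r$ restricts, on every fiber, to a generator of $\widetilde h^{2n_r+1}(S^{2n_r+1})$. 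Hence $1$ and $x_r$ restrict to an $h^*$-basis of $h^*(S^{2n_r+1})$ on each fiber, and the Leray--Hirsch theorem yields an isomorphism of $h^*(\L_{\m}(t))$-modules
$$h^*(\L_{\n}(t))\ \cong\ h^*(\L_{\m}(t))\otimes_{h^*}h^*(S^{2n_r+1})\ \cong\ h^*(\L_{\m}(t))\otimes_{h^*}\Lambda_{h^*}[x_r].$$
Combining this with the inductive hypothesis $h^*(\L_{\m}(t))\cong h^*(\L_{(n_1)}(t))\otimes_{h^*}\Lambda_{h^*}[x_2,\dots,x_{r-1}]$ gives~(\ref{ladescrip}) and closes the induction.

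The main obstacle — and the conceptual heart of the proof — is the vanishing of $e(E)$: it is exactly the ordering assumption $n_1\leq\cdots\leq n_r$ that pushes the exponent $n_r+1$ attached to the Euler class past the nilpotence degree $n_1+1$ of $c_1(\gamma_{(n_1)}(t))$ on the base. The only other point requiring care is the identity $c_1(\gamma_{(n_1)}(t))^{\,n_1+1}=0$ in the finite-torsion case, which for a general complex-oriented $h$ is not a matter of dimension (the group $h^{2n_1+2}(L^{2n_1+1}(t))$ need not vanish) but must be extracted from the sphere-bundle-over-$B\mathbb{Z}_t$ picture; everything else — the Thom-class lift, the Leray--Hirsch step, and the bookkeeping of the induction — is routine.
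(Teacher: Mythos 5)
Your proposal is correct, and while it shares the paper's overall skeleton---induction on $r$ via the sphere-bundle presentation $\L_{\n}(t)=S((n_r+1)\gamma_{\m}(t))$ and a Thom class producing $x_r$---the mechanism you use to degenerate the long exact sequence is genuinely different. The paper observes that the bundle projection $p\colon\L_{\n}(t)\to\L_{\m}(t)$ admits a section $j$, sending $(x_1,\dots,x_{r-1})$ to $(x_1,\dots,x_{r-1},x_{r-1})$ via the inclusion $S^{2n_{r-1}+1}\subseteq S^{2n_r+1}$ (this is where the ordering hypothesis enters there); hence the cofibration $\L_{\n}(t)\to\L_{\m}(t)\to T((n_r+1)\gamma_{\m}(t))$ yields \emph{split} short exact sequences at once, with no extra work for the module splitting. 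You instead kill the Euler class $c_1(\gamma_{\m}(t))^{n_r+1}$ by pulling it back from $\L_{(n_1)}(t)$ and using $n_r\geq n_1$, then recover the splitting via the Thom-class lift and Leray--Hirsch. This costs an additional step (the splitting is no longer automatic) but isolates the role of the ordering hypothesis in characteristic-class terms; note that the paper's section already forces $e((n_r+1)\gamma_{\m}(t))=0$, so your key vanishing is a consequence of, and strictly weaker than, the geometric input the paper uses---and it is the section that makes the decomposition functorial in the way the paper later exploits for the stable splitting and the Steenrod action. One small simplification: your care over $c_1(\gamma_{(n_1)}(t))^{\,n_1+1}=0$ for finite $t$ is warranted, but by~(\ref{pullbackttprima}) the bundle $\gamma_{(n_1)}(t)$ is pulled back from $\gamma_{(n_1)}(\infty)$ along $L^{2n_1+1}(t)\to\L^{n_1}$, so the relation follows immediately from the $t=\infty$ case; your $B\mathbb{Z}_t$ sphere-bundle argument is also valid, just longer than necessary.
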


It will be clear from the proof below, that the tensor-product description in~(\ref{ladescrip}) is functorial with respect to the maps $\pi_{t,t'}$ at the end of the previous section. (This is why we can safely remove the parameter $t$ from the notation for the classes $x_i$.) For instance, the various $x_i$ arise as Thom classes, which behave nicely due to~(\ref{pullbackttprima}). Likewise, a nice functorial behavior will also be clear for the maps $p$ at the end of the previous section provided the first coordinate $n_1$ of $\overline{n}$ is kept in $\overline{m}$. In such a case, the induced morphism $p^*$ is injective with image the $h^*(\L_{(n_1)}(t))$-submodule generated by the products of the classes $x_i$ for which $n_i$ is one of the coordinates of $\overline{n}$ that are kept in forming $\overline{m}$. The latter observation will be crucial in the proof of the  splitting of $\Sigma\L_{\overline{n}}(t)$ (Corollary~\ref{decomthom} below).

\begin{proof}[Proof of Theorem~$\ref{additive}$] The  case $r=1$ is vacuously true, so assume the result is known for $\L_{\m}(t)$ with $\m=(n_1,\dots,n_{r-1})$. Since $\L_{\n}(t)=S((n_r+1)\gamma_{\m}(t))$, there is a cofibration
\begin{equation}\label{Cof}
\L_{\n}(t)\stackrel{p}{\longrightarrow}
\L_{\m}(t)\stackrel{i}{\longrightarrow} T
\end{equation} where $T$ stands for the
Thom space $T\left((n_r+1)\gamma_{\m}(t)\right)$. As noted at the end of the previous section, the map $p$ is a retraction. Consequently, the long exact sequence in $h^*$-cohomology for~(\ref{Cof}) breaks into a family of split short exact sequences
\begin{equation}\label{splitses}
0\longrightarrow h^*(\L_{\m}(t))\stackrel{p^*}{\longrightarrow}
h^*(\L_{\n}(t))\stackrel{\delta}{\longrightarrow}
h^{*+1}(T)\longrightarrow 0.
\end{equation}
Thus, as a graded $h^*$-module,
\begin{equation}\label{hmod}
h^*(\L_{\n}(t))\cong h^*(\L_{\m}(t))\oplus h^{*+1}(T).
\end{equation}
Let $x_r\in h^{2n_r+1}(\L_{\n}(t))$ correspond to the Thom class $U_{(n_r+1)\gamma_{\m}(t)}$ in $h^{2(n_r+1)}(T)$, so that~(\ref{hmod}) takes the form $h^*(\L_{\n}(t))\cong h^*(\L_{\m}(t))\otimes\Lambda_{h^*}[x_r]$. The result follows by induction.
\end{proof}
Note that the proof above implies that, in terms of the isomorphism in Theorem~\ref{additive}, the map $h_1^*(\L_{\n}(t))\to h_2^*(\L_{\n}(t))$ induced by a morphism $\alpha\colon h^*_1\to h^*_2$ of commutative complex-oriented cohomology theories takes the form $\alpha_{L_{\n}}=\alpha_{L_{(n_1)}}\otimes1$.

\medskip
Since each projection map $p$ in~(\ref{Cof}) induces a monomorphism of $h^*$-algebras, a full description of the ring structure of $h^*(\L_{\n}(t))$ requires (a) an explicit description of the ring $h^*(\L_{(n_1)}(t))$, and (b) a determination of the value of each square $x_i^2$ in Theorem~$\ref{additive}$. After reviewing the well-known solution for the first task (Examples~\ref{ringmod2} and~\ref{ringZ}), we will focus on the second task (Theorem~\ref{productsZ}).

\begin{ejemplo}\label{ringmod2}{\em
Let $z\in h^2(\L^\infty)$ be the orientation of $h^*$. It is standard that, as an $h^*$-algebra, $h^*(\L^{n_1})$ is generated by the restriction of $z$ (denoted again by $z$) subject to the single relation $z^{n_1+1}=0$. Further, if the positive integer $t$ is a not a zero divisor in $h^*$ then, as an $h^*$-algebra,
\begin{equation}\label{zerodivisor}
h^*(L^{2n_1+1}(t))\cong h^*[z]\left/\left(z^{n_1+1},[t](z)\right)\right.\oplus h^*\cdot\omega
\end{equation}
where $\dim(\omega)=2n_1+1$ and $[t]$ stands for the $t$-series associated to $h^*$. Here $z$ is the image of the corresponding class $z$ in $\L^{n_1}$ under the canonical projection $L^{2n_1+1}(t)\to\L^{n_1}$. Furthermore the class $\omega$ pulls back under the canonical projection $\mathrm{proj}\colon S^{2n_1+1}\to L^{2n_1+1}(t)$ to the $t$-multiple of the generator $\iota$ corresponding (under the suspension isomorphism) to $1\in h^*=\overline{h}^{\hspace{.3mm}*}\hspace{-.6mm}(S^0)$. In particular, it follows from the commutative diagram
$$\xymatrix{
S^{2n_1+1}\times\cdots\times S^{2n_r+1}\ar[d]_{\pi_1}\ar[r]^{\mathrm{\ \hspace{15mm} proj}}&L_{\n}(t)\ar[d]^{p}\\S^{2n_1+1}\ar[r]_{\mathrm{\ \hspace{-4mm} proj}}&L^{2n_1+1}(t)
}$$
that, as an element in $h^*(L_{\n}(t))$, $\omega$ pulls back under the top horizontal map to the $t$-multiple of $\iota\otimes1\otimes\cdots\otimes1$.
}\end{ejemplo}

\begin{ejemplo}\label{ringZ}{\em
Let $L=L^{2n_1+1}(2^e)$. Then $H^*(L;\mathbb{Z}_2)$ is the $\mathbb{Z}_2$-algebra generated by a class $y$ of dimension $1$, and the mod 2 reduction of $z\in H^*(L;\mathbb{Z})$ (again denoted by $z$), subject only to the relations $y^2=2^{e-1} z$ and $z^{n_1+1}=0$. These generators are connected through the $e$-th Bockstein morphism.
}\end{ejemplo}

As for the ring structure in $h^*(\L_{\n}(t))$, we have:

\begin{theorem}\label{productsZ}
If $2$ is a not a zero divisor in $h^*$, then the relations $x_i^2=0$ hold in Theorem~$\ref{additive}$. For $h^*=H^*(-\hspace{.5mm};\mathbb{Z}_2)$ we actually have that the Steenrod squares act as $\hspace{.2mm}\Sq(x_i)=(1+z)^{n_i+1}x_i$.
\end{theorem}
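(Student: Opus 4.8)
The plan is to derive both statements from the behavior of Steenrod squares (resp.\ of squaring) on a Thom class, after first cutting the problem down to a single ``new'' exterior generator. Fix $i\in\{2,\dots,r\}$ and let $\widehat p\colon\L_{\n}(t)\to\L_{(n_1,n_i)}(t)$ be the retraction that forgets all coordinates other than the first and the $i$-th; this map is available because $n_1\le n_i$. By the functoriality of the decomposition in Theorem~\ref{additive} recorded right after its statement, $\widehat p^{\,*}$ is an injective map of $h^*$-algebras that sends the orientation class $z$ to $z$ and the exterior generator of $\L_{(n_1,n_i)}(t)$ to $x_i$, and of course it commutes with Steenrod squares. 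Consequently it is enough to prove, for each pair $n_1\le N$, that in $h^*(\L_{(n_1,N)}(t))\cong h^*(\L_{(n_1)}(t))\otimes_{h^*}\Lambda_{h^*}[x]$ with $\dim x=2N+1$, one has $x^2=0$ whenever $2$ is not a zero divisor in $h^*$, and $\Sq(x)=(1+z)^{N+1}x$ when $h^*=H^*(-;\mathbb{Z}_2)$.

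For the Steenrod formula, write $B=\L_{(n_1)}(t)$ and $\eta=(N+1)\gamma_{(n_1)}(t)$, so that $\L_{(n_1,N)}(t)=S(\eta)$ by~(\ref{spherebundle}) and we are in the situation of the cofibration~(\ref{Cof}), with $T=T(\eta)$ and $\delta(x)=U_\eta$. Recall that $\delta$ commutes with Steenrod operations, is $H^*(B;\mathbb{Z}_2)$-linear with kernel $p^*H^*(B;\mathbb{Z}_2)$, and that $H^*(S(\eta);\mathbb{Z}_2)$ is free over $H^*(B;\mathbb{Z}_2)$ on $\{1,x\}$. Since the total Stiefel--Whitney class of the realification of $\eta$ is the mod~$2$ reduction of $c(\eta)=(1+z)^{N+1}$, where $z=c_1(\gamma_{(n_1)}(t))$, the standard formula for the action of $\Sq$ on a Thom class gives $\Sq(U_\eta)=(1+z)^{N+1}U_\eta$. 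Applying $\delta$ to $\Sq(x)-(1+z)^{N+1}x$, using the naturality of $\Sq$ and the identity $\delta\bigl((1+z)^{N+1}x\bigr)=(1+z)^{N+1}U_\eta$, one finds that this element lies in $p^*H^*(B;\mathbb{Z}_2)$. Its component in degree $2N+1$ vanishes, because there $\Sq^0x$ and the degree-$(2N+1)$ part of $(1+z)^{N+1}x$ both equal $x$; hence the element lies in $p^*H^{\ge 2N+2}(B;\mathbb{Z}_2)$. But $\dim B\le 2n_1+1<2N+2$ because $N\ge n_1$, so that group is trivial and $\Sq(x)=(1+z)^{N+1}x$.

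Turning to the squaring relations: over $H^*(-;\mathbb{Z}_2)$, instability yields $x^2=\Sq^{2N+1}(x)$, which is the degree-$(4N+2)$ part of $(1+z)^{N+1}x=\sum_k\binom{N+1}{k}z^kx$; no summand has that degree because $2N+1$ is odd, so $x^2=0$ mod~$2$. For an arbitrary complex-oriented $h^*$ in which $2$ is not a zero divisor, write $x^2=p^*(a)x+p^*(b)$ with $a\in h^{2N+1}(B)$ and $b\in h^{4N+2}(B)$; graded commutativity gives $2x^2=0$, and freeness of $h^*(S(\eta))$ over $h^*(B)$ then forces $2a=2b=0$. The complex orientation of $h^*$ is classified by a ring map $MU^*\to h^*$, and by the functoriality remark after Theorem~\ref{additive} the induced map carries $x$ to $x$ and $z$ to $z$, so it suffices to treat $h^*=MU^*$. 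Here $MU^*$ is concentrated in non-positive even degrees, and a direct inspection based on Example~\ref{ringmod2} (using $N\ge n_1$) shows that $MU^{2N+1}(B)$ and $MU^{4N+2}(B)$ contain no $2$-torsion; therefore $a=b=0$ and $x^2=0$. Pulling these identities back along the injective maps $\widehat p^{\,*}$ gives $x_i^2=0$ and $\Sq(x_i)=(1+z)^{n_i+1}x_i$ in $h^*(\L_{\n}(t))$.

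I expect the only real difficulty to be the control of the correction term: a priori $\Sq(x)$, respectively $x^2$, differs from the asserted value by an unknown class in the image of $p^*$, and eliminating it on $\L_{\n}(t)$ directly is awkward. The reduction to the two-factor space $\L_{(n_1,N)}(t)$ is precisely what makes the correction unavoidably vanish --- it is pushed either into too high a cohomological degree (for $\Sq$) or, after passing to $MU^*$, into a torsion-free group (for $x^2$).
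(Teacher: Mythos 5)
Your proof is correct, and while it rests on the same two pillars as the paper's --- the Thom-class formula $\Sq(U_\eta)=(1+z)^{N+1}U_\eta$ and the split exact sequence underlying Theorem~\ref{additive} --- it resolves the two key difficulties by genuinely different means. For the Steenrod formula, the issue in both arguments is the potential correction term in $\ker\delta=\operatorname{im}(p^*)$. The paper kills it by reducing to $t=\infty$ and observing that the splitting of~(\ref{splitses}) is itself Steenrod-equivariant (both $p^*$ and the retraction $j^*$ commute with $\Sq$, so $\Sq$ preserves the complement $\ker(j^*)$ on which $\delta$ is injective); you instead restrict to the two-factor space $\L_{(n_1,N)}(t)$ and kill the correction by the degree count $2N+2>\dim\L_{(n_1)}(t)$, using $n_1\le N$. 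Both are sound; the paper's version avoids any dimension estimate, while yours avoids having to argue that the section respects the Steenrod action. For $x_i^2=0$ the divergence is larger: the paper reduces to $t=\infty$, where $h^*(\L_{\n}(\infty))$ is a free $h^*$-module and hence $2$-torsion-free, so $2x_i^2=0$ finishes the argument in two lines. You instead push the universal case to $MU^*$ and verify by inspection that $MU^{2N+1}$ and $MU^{4N+2}$ of $\L_{(n_1)}(t)$ are torsion-free (indeed $0$ or $\mathbb{Z}$, since $MU^*$ is concentrated in non-positive even degrees and $4N+2>2N+1\ge\dim\L_{(n_1)}(t)$). This is longer, and it imports the standard fact that a complex orientation induces a multiplicative transformation $MU^*\to h^*$, but it buys something real: it handles finite $t$ directly without passing through $t=\infty$, and since $x^2=0$ already holds over $MU^*$, naturality gives $x_i^2=0$ in \emph{every} commutative complex-oriented theory --- showing that the hypothesis that $2$ be a non-zero-divisor in $h^*$ is an artifact of the paper's shortcut rather than a genuine necessity.
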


In this result, the second assertion implies that the conclusion of the first assertion also holds for $h^*=H^*(-\hspace{.5mm};\mathbb{Z}_2)$---even though $2=0$ in $\mathbb{Z}_2$. In turn, the second assertion can be proved just as in~\cite{DDa}---noticing from Example~\ref{ringZ} and the functoriality discussion after the statement of Theorem~\ref{additive}
that it suffices to consider the case $t=\infty$: As noted in~(\ref{pullbackttprimaprima}), the iterated projection map $p\colon\L_{\n}(\infty)\to\L_{(n_1)}(\infty)$ pulls the dual Hopf bundle over $\L_{(n_1)}(\infty)$ back to $\gamma_{\n}(\infty)$. Therefore, in terms of the notation in~Example~\ref{ringmod2}, the total Stiefel-Whitney class of $\gamma_{\n}(\infty)$ is $W(\gamma_{\n}(\infty))=1+z$. Thus, in the proof of Theorem~\ref{additive} (with $h^*=H^*(-\hspace{.5mm};\mathbb{Z}_2)$) we have
\begin{eqnarray*}
\Sq(U_{(n_r+1)\gamma_{\m}(\infty)})&=&W((n_r+1)\gamma_{\m}(\infty)) U_{(n_r+1)\gamma_{\m}(\infty)}\\
&=&(1+z)^{n_r+1}U_{(n_r+1)\gamma_{\m}(\infty)},
\end{eqnarray*}
and the second assertion in Theorem~\ref{productsZ} follows by induction, and from the fact that the maps in~(\ref{splitses}), including the section, preserve the action of the Steenrod algebra.

\medskip
The proof of the case $h^*=H^*(-\hspace{.5mm};\mathbb{Z})$ (and $t=2$) of Theorem~\ref{productsZ} given in~\cite{DDa} is rather circuitous, as it depends on a preliminary knowledge of Theorem~\ref{productsZ} for $h^*=H^*(-\hspace{.5mm};\mathbb{Z}_2)$ and, more importantly, on an (any way interesting) stable decomposition of (real) product projective spaces. Indeed, the stable decomposition forces the behavior of differentials in the Cartan-Leray spectral sequence for the action of $\mathbb{Z}_2$ on the product of spheres defining the projective space under consideration. As a much more direct alternative, our proof of the (more general) Theorem~\ref{productsZ} is conceptually simpler, taking key advantage of this paper's ``lentified'' viewpoint.

\begin{proof}[Proof of Theorem~$\ref{productsZ}$]
It only remains to prove the first assertion. As observed in the paragraph following the statement of the theorem, it suffices to consider the situation for $t=\infty$. In that case, the desired assertion follows from the graded commutativity of $h^*$, the fact that each class $x_i$ is odd dimensional (these two conditions imply $2x_i^2=0$), and the fact that $h^*(\mathbb{C}\P_{\n}(\infty))$ is 2-torsion free---because $h^*$ is so by hypothesis.
\end{proof}

\section{Stable and unstable decompositions}

Definition~\ref{invariant} below isolates the main ingredient in~\cite{DDa} leading to an unstable decomposition of (real) projective product spaces. The proof of~\cite[Theorem~2.20(1)]{DDa} then generalizes directly to give Proposition~\ref{cartesian_product} below.

\begin{definition}\label{invariant}
Let $0<t\leq\infty$. A map $f:S^{2a+1}\times S^{2b+1}\to S^{2b+1}$ is called \invariant provided:
\begin{itemize}
\item[$(i)$] $f(\lambda z,\lambda \omega)=f(z,\omega)\,$ for $z\in S^{2a+1}$, $\omega\in S^{2b+1}$, and $\lambda\in\mathbb{Z}_t\subseteq S^1$.
\item[$(ii)$] $f(z,-):S^{2b+1}\to S^{2b+1}$ is an homeomorphism for any $z\in S^{2a+1}$.
\end{itemize}
\end{definition}

\begin{nota}\label{ccoojj}{\em
In terms of the map $c\colon S^{2a+1}\to S^{2a+1}$ that conjugates all complex coordinates, condition $(i)$ means that the composite $g=f\circ(c\times 1)$ satisfies 
\begin{itemize}
\item[$(i')$] $g(\lambda^{-1}z,\lambda\omega)=g(z,\omega)$ for $z\in S^{2a+1}$, $\omega\in S^{2b+1}$, and $\lambda\in\mathbb{Z}_t\subseteq S^1$.
\end{itemize}
}\end{nota}

\begin{proposition}\label{cartesian_product}
Let $0<t\leq\infty$. There are homeomorphisms $$\cpnt\cong\cpmt\times\prod_{i\in I_t}S^{2n_i+1}$$ where $I_t$ consists of the indices $i\in\{2,\ldots,r\}$ for which there is a \invariant map $f_i:S^{2n_1+1}\times S^{2n_i+1}\to S^{2n_i+1}$, and $\overline{m}$ is the tuple obtained from $\overline{n}$ by removing those $n_i$ with $i\in I_t$.
\end{proposition}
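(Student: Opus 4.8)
The plan is to imitate the proof of~\cite[Theorem~2.20(1)]{DDa}: the maps $f_i$ will be assembled into an explicit self-homeomorphism of the product of spheres $Y:=S^{2n_1+1}\times\cdots\times S^{2n_r+1}$ which converts the diagonal $\mathbb{Z}_t$-action $\lambda\cdot(x_1,\dots,x_r)=(\lambda x_1,\dots,\lambda x_r)$ into an action that is trivial on the sphere factors indexed by $I_t$; a quotient by such an action is manifestly the product appearing in the statement.

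First I would set $\Phi\colon Y\to Y$, $\Phi(x_1,\dots,x_r)=(x_1,x_2',\dots,x_r')$, where $x_i'=f_i(x_1,x_i)$ for $i\in I_t$ and $x_i'=x_i$ otherwise. Condition~$(ii)$ of Definition~\ref{invariant} shows that, for each fixed $x_1$, the assignment $x_i\mapsto f_i(x_1,x_i)$ is a bijection of $S^{2n_i+1}$, so $\Phi$ is a continuous bijection; being a continuous bijection of a compact Hausdorff space, $\Phi$ is a homeomorphism. (Equivalently, each auxiliary map $(z,\omega)\mapsto(z,f_i(z,\omega))$ of $S^{2n_1+1}\times S^{2n_i+1}$ is a homeomorphism by the same compactness argument, and $\Phi^{-1}$ can be read off from their inverses.) Next I would check that $\Phi$ is equivariant for the diagonal action on the source and, on the target, for the action $\star$ of $\mathbb{Z}_t$ on $Y$ that acts diagonally on the coordinates outside $I_t$ and trivially on those in $I_t$; this is precisely condition~$(i)$, since $f_i(\lambda x_1,\lambda x_i)=f_i(x_1,x_i)$ forces the $I_t$-coordinates of $\Phi(\lambda\cdot x)$ to agree with those of $\Phi(x)$, while the remaining coordinates are carried along diagonally. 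As $\Phi^{-1}$ is then equivariant too, $\Phi$ descends to a homeomorphism $\cpnt=Y/\mathbb{Z}_t\xrightarrow{\cong}Y/{\star}$. Finally, writing $Y=Y'\times\prod_{i\in I_t}S^{2n_i+1}$ with $Y'=\prod_{i\notin I_t}S^{2n_i+1}$, the action $\star$ is the product of the diagonal $\mathbb{Z}_t$-action on $Y'$ with the trivial action on the remaining factors, whence $Y/{\star}=\cpmt\times\prod_{i\in I_t}S^{2n_i+1}$---here one uses that $1\notin I_t$, so that $n_1$ is among the entries of $\overline{m}$ and $Y'/\mathbb{Z}_t$ is exactly $\cpmt$.

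The argument is essentially formal once $\Phi$ is in hand, so I do not anticipate a genuine obstacle; the one point that deserves care is the continuity of $\Phi^{-1}$---one must not blithely assert that $x_1\mapsto f_i(x_1,-)^{-1}$ varies continuously, but this is supplied for free by the compact-Hausdorff bijection argument above. (If preferred, Remark~\ref{ccoojj} lets one run the whole argument with $g_i=f_i\circ(c\times1)$ and condition~$(i')$ in the roles of $f_i$ and condition~$(i)$, with no change of substance.)
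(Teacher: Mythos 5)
Your proof is correct and is essentially the argument the paper intends: the paper simply states that the proof of \cite[Theorem~2.20(1)]{DDa} generalizes directly, and that proof is exactly your construction of an equivariant self-homeomorphism $\Phi$ of the product of spheres converting the diagonal action into one that is trivial on the factors indexed by $I_t$. Your compact-Hausdorff justification for the continuity of $\Phi^{-1}$ correctly handles the one delicate point.
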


$\mathbb{Z}_2$-invariant maps can be obtained as restrictions of normed ($\mathbb{R}$-linear) multiplications. In fact, as observed in the proof of~\cite[Theorem~2.20]{DDa}, a $\mathbb{Z}_2$-invariant map $f_i\colon S^{2n_1+1}\times S^{2n_i+1}\to S^{2n_i+1}$ can exist only when the sphere $S^{2n_i+1}$ splits off as a cartesian factor of $\L_{\overline{n}}$ which, in turn, happens if and only if $\mathbb{R}^{2n_i+2}$ admits a Cliff$(2n_1+1)$-module structure---a condition which is classical and well understood. But the existence of \invariant maps for $t>2$ does not appear to have been considered in the literature. In the simplest case, Remark~\ref{ccoojj} shows that the complex vector space structure on $\mathbb{C}^{n_i+1}$ gives rise to a \invariant map $S^1\times S^{2n_i+1}\to S^{2n_i+1}$ for any~$t$. In particular:

\begin{corolario}\label{casoelemental}
If $n_1=0$ and $0<t\leq\infty$, then
$$\cpnt\cong\begin{cases}
\prod_{2\leq i\leq r}S^{2n_i+1},&t=\infty;\\
\prod_{1\leq i\leq r}S^{2n_i+1},&\mbox{otherwise.}
\end{cases}$$
\end{corolario}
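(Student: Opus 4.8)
The plan is to read this off from Proposition~\ref{cartesian_product}. First I would use the hypothesis $n_1=0$, which identifies $S^{2n_1+1}$ with $S^1$: for each index $i\in\{2,\dots,r\}$ the scalar-multiplication map
$$
f_i\colon S^1\times S^{2n_i+1}\to S^{2n_i+1},\qquad f_i(z,\omega)=\overline{z}\,\omega,
$$
coming from the $\mathbb{C}$-vector space structure on $\mathbb{C}^{n_i+1}\supseteq S^{2n_i+1}$ (multiplication by the unit complex number $\overline{z}=z^{-1}$), is \invariant for every $0<t\leq\infty$. Indeed, clause $(ii)$ of Definition~\ref{invariant} holds because $f_i(z,-)$ is multiplication by $z^{-1}\in S^1$, hence a homeomorphism of $S^{2n_i+1}$; and clause $(i)$ holds because $f_i(\lambda z,\lambda\omega)=\overline{\lambda z}\,\lambda\omega=|\lambda|^2\,\overline{z}\,\omega=\overline{z}\,\omega$ for $\lambda\in\mathbb{Z}_t\subseteq S^1$. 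This is exactly the instance of Remark~\ref{ccoojj} recorded in the paragraph just before the statement. Consequently $I_t=\{2,\dots,r\}$, so the tuple $\overline{m}$ appearing in Proposition~\ref{cartesian_product} is $\overline{m}=(n_1)=(0)$.

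It then remains only to identify the surviving base factor $\L_{(0)}(t)$ in the two cases. For $t=\infty$ it is $\L^0$, a single point, so Proposition~\ref{cartesian_product} gives $\mathbb{C}\P_{\n}\cong\prod_{2\leq i\leq r}S^{2n_i+1}$. For finite $t$ it is the $1$-dimensional lens space $L^1(t)=S^1/\mathbb{Z}_t$, which is homeomorphic to $S^1$ via $z\mapsto z^t$ and, since $n_1=0$, coincides with $S^{2n_1+1}$; hence Proposition~\ref{cartesian_product} yields $L_{\n}(t)\cong S^{2n_1+1}\times\prod_{2\leq i\leq r}S^{2n_i+1}=\prod_{1\leq i\leq r}S^{2n_i+1}$. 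Combining the two cases gives the asserted formula.

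I do not expect a genuine obstacle here; the two points that deserve a moment's attention are (a) verifying that a single map $f_i$ satisfies both clauses of Definition~\ref{invariant} simultaneously for \emph{all} torsions $t$---which is what forces $I_t=\{2,\dots,r\}$ regardless of $t$---and (b) keeping the distinction between $\L^0$, which is a point and so disappears from the product, and $L^1(t)\cong S^1$, which must be retained and is precisely why the index range widens to $1\leq i\leq r$ when $t$ is finite.
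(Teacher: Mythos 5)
Your proposal is correct and is exactly the argument the paper intends: the scalar-multiplication map $\overline{z}\,\omega$ (the instance of Remark~\ref{ccoojj} noted just before the corollary) is $\mathbb{Z}_t$-invariant for every $t$, so Proposition~\ref{cartesian_product} applies with $I_t=\{2,\dots,r\}$ and $\overline{m}=(0)$, and the base factor is a point for $t=\infty$ but $L^1(t)\cong S^1$ for finite $t$. No gaps.
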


\begin{ejemplo}\label{paran10}{\em
Since products of spheres are stably parallelizable, and in fact parallelizable provided there are at least two factors one of which is odd dimensional (see~\cite{MR0200937} for instance), we immediately see that, except for $\L_{(0,n_2)}(\infty)\approx S^{2n_2+1}$ with $n_2\not\in\{0,1,3\}$, any $\L_{\n}(t)$ with $n_1=0$ is parallelizable.
}\end{ejemplo}

\begin{proposition}\label{normed2} For each integer $k\geq1$ there is a map $\mu_k:\mathbb{C}^2\times \mathbb{C}^{2k}\to
\mathbb{C}^{2k}$ which is normed $($i.e.~so that $|\mu(z,w)|=|z|\cdot|w|)$ and satisfies
\begin{itemize}
\item condition~$(i)$ for $z\in\mathbb{C}^2$, $w\in\mathbb{C}^{2k}$, and $\lambda\in S^1$, and
\item condition~$(ii)$ for $z\in S^3$
\end{itemize}
in Definition~$\ref{invariant}$.
\end{proposition}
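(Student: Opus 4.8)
The plan is to realize $\mu_k$ directly from quaternionic multiplication. First I would identify $\mathbb{C}^2$ with the quaternions $\mathbb{H}$ by sending $(z_1,z_2)$ to $z_1+z_2 j$, and likewise identify $\mathbb{C}^{2k}$ with $\mathbb{H}^k$ by performing the analogous identification on each consecutive pair of complex coordinates. Under these identifications, multiplication of a vector by a scalar $\lambda\in S^1\subseteq\mathbb{C}$ becomes left multiplication by the unit quaternion $\lambda$. Then I would set
$$\mu_k\bigl(z,(w_1,\dots,w_k)\bigr)=(\overline{z}\,w_1,\dots,\overline{z}\,w_k),$$
where $\overline{z}$ denotes the quaternionic conjugate of $z\in\mathbb{H}$ and all products are taken in $\mathbb{H}$. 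This is a well-defined continuous (indeed $\mathbb{R}$-bilinear) map $\mathbb{C}^2\times\mathbb{C}^{2k}\to\mathbb{C}^{2k}$.

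The verification then splits into the three required properties. For the normed condition, multiplicativity of the quaternionic norm together with $|\overline{z}|=|z|$ gives $|\mu_k(z,w)|^2=\sum_{i=1}^k|\overline{z}\,w_i|^2=|z|^2\sum_{i=1}^k|w_i|^2=|z|^2\,|w|^2$. For condition~$(i)$, the key identity is $\overline{\lambda z}=\overline{z}\,\overline{\lambda}$, so for $\lambda\in S^1$ each coordinate of $\mu_k(\lambda z,\lambda w)$ equals $\overline{z}\,\overline{\lambda}\,\lambda\,w_i=\overline{z}\,w_i$, whence $\mu_k(\lambda z,\lambda w)=\mu_k(z,w)$. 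For condition~$(ii)$, if $z\in S^3$ is a unit quaternion then $\mu_k(z,-)$ is, coordinatewise, left multiplication by $\overline{z}$; this is an $\mathbb{R}$-linear automorphism of $\mathbb{C}^{2k}=\mathbb{H}^k$, with inverse given coordinatewise by left multiplication by $z$, hence a homeomorphism (and, being norm-preserving, it restricts to a homeomorphism of the unit sphere as well).

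The only genuinely nontrivial point is placing the quaternionic conjugate correctly: using $z$ instead of $\overline{z}$, or replacing left multiplication by right multiplication, would fail to make the two scalar factors $\lambda$ cancel in condition~$(i)$, because $\mathbb{H}$ is noncommutative. This is also the structural reason the target is required to be $\mathbb{C}^{2k}$ rather than an arbitrary complex vector space: the argument needs a compatible quaternionic module structure on it, which forces the complex dimension to be even. Beyond this bookkeeping I do not anticipate any obstacle.
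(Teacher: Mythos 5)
Your construction is correct: under the identification $(z_1,z_2)\mapsto z_1+z_2j$ the diagonal complex scalar action of $\lambda\in S^1$ does become left quaternionic multiplication by $\lambda$ (this is the one place where the convention matters---with $z_1+jz_2$ it would fail), quaternionic conjugation being an anti-automorphism gives $\overline{\lambda q}\,(\lambda w_i)=\bar q\,\bar\lambda\lambda w_i=\bar q\,w_i$, and multiplicativity of the quaternion norm gives the normed condition and the invertibility needed for condition $(ii)$. The paper proves the same statement by a different packaging of what is ultimately the same algebra: it takes the irreducible solution $A_1,A_2,A_3$ of the unitary Hurwitz--Radon $(2\times2)$-matrix equations from Eckmann's work, forms the bilinear map $\delta(t_1,\dots,t_4,w)=t_1A_1w+t_2A_2w+t_3A_3w+t_4w$, precomposes with a conjugation in one of the $w$-coordinates, and checks the required properties by direct computation, obtaining the explicit formula $\mu_1(z_1,z_2,w_1,w_2)=(i\bar z_1w_1+z_2\bar w_2,\,-\bar z_2w_1-iz_1\bar w_2)$; your $\bar q_z q_w=(\bar z_1w_1+z_2\bar w_2,\,\bar z_1w_2-z_2\bar w_1)$ differs from this only by a unitary change of coordinates. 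What your route buys is conceptual transparency---the verification of conditions $(i)$ and $(ii)$ reduces to the identities $\overline{\lambda q}=\bar q\bar\lambda$ and $q\bar q=|q|^2$ rather than a coordinate check---while the paper's route makes explicit the connection to the unitary Hurwitz--Radon matrix equations, which is the framework the authors then invoke when explaining why their method does not extend to $\mathbb{Z}_t$-invariant maps $S^{2a+1}\times S^{2b+1}\to S^{2b+1}$ with $a\geq2$. Your closing observation that the even complex dimension of the target is forced by the need for a quaternionic module structure is consistent with that discussion.
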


\begin{proof} The argument is a partial (unitary) analogue of the well-known meth\-od to construct orthogonal multiplications from explicit solutions to the Hurwitz-Radon (orthogonal) matrix equations. Consider the map 
$\delta:\mathbb{R}^4\times \mathbb{C}^{2}\to
\mathbb{C}^{2}$ given by $$\delta(t_1,t_2,t_3,t_4,w_1,w_2)=
t_1A_1w+t_2A_2w+t_3A_3w+t_4I_2w$$
where $I_2$ is the $2\times2$ unit matrix, $w$ is the transpose of the row vector $(w_1,w_2)$, and
\begin{equation}\label{tresmatrices}
A_1 = \left( \begin{array}{rr} i & 0  \\
0 & -i \end{array} \right),\quad A_2 = \left(
\begin{array}{rr} 0 & 1 \\ -1 & 0 \end{array}
\right), \mbox{ \ \ and}\quad A_3 = \left(
\begin{array}{rr} 0 & i \\ i & 0  \end{array}
\right).
\end{equation}
Identify $\mathbb{R}^4=\mathbb{C}^2$ via $z_1=t_1+it_{4}$ and $z_2=t_2+it_{3}$, and precompose $\delta$ with the map $\gamma:\mathbb{C}^2\times\mathbb{C}^{2}\to\mathbb{C}^{2}\times
\mathbb{C}^{2}$ given
by $\gamma(z_1,z_2,w_1,w_2)=(z_1,z_2,w_1,\overline{w}_2)$ to get a map $\mu_1:\mathbb{C}^2\times\mathbb{C}^2\to\mathbb{C}^2$. Explicitly
$$\mu_1(z_1,z_2,w_1,w_2)=
(i\overline{z}_1w_1+z_2\overline{w}_2,-\overline{z}_2w_1-iz_1\overline{w}_2)$$
which, by direct verification, satisfies the required conditions for $k=1$. More generally, the required map $\mu_k:\mathbb{C}^2\times\mathbb{C}^{2k}\to\mathbb{C}^{2k}$ is given by
$$
\mu_k(\alpha,\beta_1,\ldots,\beta_k)=(\mu_1(\alpha,\beta_1),\ldots,\mu_1(\alpha,\beta_k))
$$
where $\alpha$ and the $\beta_i$ represent pairs of complex numbers.
\end{proof}

\begin{corolario}\label{primercasointeresante}
If $n_1=1$ and $0<t\leq\infty$, then
$$
\mathbb{CP}_{\n}(t)\cong\mathbb{CP}_{\m}(t)\times\prod_{i\in J} S^{2n_i+1}.
$$
Here $J$ consists of all indices $i\in\{2,\ldots,r\}$ for which $n_i$ is odd, and $\m$ is the tuple obtained from $\n$ by removing all the $n_i$ with $i\in J$.
\end{corolario}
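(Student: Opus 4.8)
The plan is to deduce Corollary~\ref{primercasointeresante} from Proposition~\ref{cartesian_product} by exhibiting, for each index $i\in\{2,\dots,r\}$ with $n_i$ odd, a \invariant map $f_i\colon S^3\times S^{2n_i+1}\to S^{2n_i+1}$, and conversely by noting that no such map can exist when $n_i$ is even. First I would write $2n_i+2=4k$ when $n_i$ is odd, so that $S^{2n_i+1}=S(\mathbb{C}^{2k})$ is the unit sphere of $\mathbb{C}^{2k}$. Proposition~\ref{normed2} supplies a normed map $\mu_k\colon\mathbb{C}^2\times\mathbb{C}^{2k}\to\mathbb{C}^{2k}$ satisfying condition~$(i)$ for $\lambda\in S^1$ (hence a fortiori for $\lambda\in\mathbb{Z}_t$) and condition~$(ii)$ for $z\in S^3$. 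Restricting $\mu_k$ to $S^3\times S^{2n_i+1}$ and noting that a normed map sends unit vectors to unit vectors gives the desired $f_i\colon S^3\times S^{2n_i+1}\to S^{2n_i+1}$: it lands in $S^{2n_i+1}$ by the norm condition, it satisfies $(i)$ by restriction, and $f_i(z,-)$ is, for each fixed $z\in S^3$, the restriction of an invertible $\mathbb{C}$-linear (hence norm-preserving on the unit sphere) map, so it is a homeomorphism of $S^{2n_i+1}$. Thus every such index $i$ lies in the set $I_t$ of Proposition~\ref{cartesian_product}, and that proposition yields the asserted homeomorphism with $J\subseteq I_t$.

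For the reverse inclusion $I_t\subseteq J$, i.e.\ that $n_i$ even forces $i\notin I_t$, I would argue that a \invariant map $f_i\colon S^3\times S^{2n_i+1}\to S^{2n_i+1}$ cannot exist when $2n_i+2$ is not divisible by $4$. The cleanest route is the one already flagged in the text between Proposition~\ref{cartesian_product} and Corollary~\ref{casoelemental}: existence of a \invariant map $S^{2n_1+1}\times S^{2n_i+1}\to S^{2n_i+1}$ (for $t=2$, and the general-$t$ case is formally weaker to rule out only if one is careful) implies $S^{2n_i+1}$ splits off $\L_{\overline n}$, which by the Clifford-module discussion requires $\mathbb{R}^{2n_i+2}$ to carry a $\mathrm{Cliff}(3)$-module structure; since $\mathrm{Cliff}(3)$-modules have dimension a multiple of $4$, this forces $4\mid 2n_i+2$, i.e.\ $n_i$ odd. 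I would spell out the mild point that for finite $t>2$ a \invariant map is \emph{a priori} a weaker datum than a $\mathbb{Z}_2$-invariant one, so the nonexistence for even $n_i$ needs its own justification; concretely, from condition~$(i)$ for $\lambda\in\mathbb{Z}_t$ one still gets a well-defined section of the bundle $S\big(\gamma_{(n_1)}(t)\otimes?\big)$ — more simply, one can observe that such an $f_i$ would descend, via~(\ref{spherebundle}), to a nowhere-zero section, hence a splitting, of an associated $\mathbb{C}^{n_i+1}$-bundle over $L^{2n_1+1}(t)=L^3(t)$, and run a characteristic-class (e.g.\ top Chern / Euler class) obstruction; I would use whichever of these the earlier Clifford discussion makes available without re-proving it.

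I would then assemble the two inclusions to get $I_t=J$ exactly, and invoke Proposition~\ref{cartesian_product} with this identification of $I_t$ to obtain $\mathbb{CP}_{\n}(t)\cong\mathbb{CP}_{\m}(t)\times\prod_{i\in J}S^{2n_i+1}$, where $\m$ is $\n$ with the odd $n_i$ ($i\geq2$) deleted — noting that $n_1=1$ is odd but is never removed since the index set runs over $\{2,\dots,r\}$, so $n_1$ persists as the first coordinate of $\m$ and $\mathbb{CP}_{\m}(t)$ is a bona fide smaller lens/projective product space.

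I expect the main obstacle to be the reverse inclusion: proving that an even $n_i$ admits \emph{no} \invariant map $S^3\times S^{2n_i+1}\to S^{2n_i+1}$, uniformly in $0<t\leq\infty$. For $t=\infty$ or $t$ large this is essentially the classical fact that $S^3$ acts freely-ish only on odd-complex-dimensional spheres in the relevant normed sense, but making the argument watertight for \emph{all} finite $t$, where the invariance hypothesis is weakest, is the delicate point; my intended fix is to reduce, via Remark~\ref{ccoojj} and~(\ref{spherebundle}), to a splitting/section statement for a genuine complex bundle over a lens space $L^3(t)$ and kill it with an Euler-class computation, rather than trying to leverage the Clifford-module classification directly in the $\mathbb{Z}_t$ setting.
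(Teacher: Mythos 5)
Your first paragraph is the entire proof, and it is exactly the paper's intended argument: for $n_i$ odd one has $n_i+1=2k$, so Proposition~\ref{normed2} restricts on unit spheres to a $\mathbb{Z}_t$-invariant map $S^3\times S^{2n_i+1}\to S^{2n_i+1}$ (condition $(i)$ for $\lambda\in S^1$ implies it for $\lambda\in\mathbb{Z}_t$), whence $J\subseteq I_t$ and Proposition~\ref{cartesian_product} gives the homeomorphism. The only point worth making explicit is that if $J\subsetneq I_t$ you should either note that the proof of Proposition~\ref{cartesian_product} (Davis' argument, which builds the homeomorphism directly from the chosen maps $f_i$) applies verbatim to any subset of indices admitting invariant maps, or apply the proposition once to $\overline{n}$ and once to $\overline{m}$ and compare; either way the splitting over $J$ alone follows. (A minor slip: $\mu_1(z,-)$ is $\mathbb{R}$-linear, not $\mathbb{C}$-linear, since it involves $\overline{w}_2$; this is harmless because condition $(ii)$ is already asserted in Proposition~\ref{normed2}.)

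The bulk of your proposal---the ``reverse inclusion'' $I_t\subseteq J$---is not needed and should be cut. The corollary asserts only that a homeomorphism of the stated form exists; it makes no claim that the splitting is maximal, so nonexistence of invariant maps for even $n_i$ is irrelevant. Worse, the statement you are trying to prove is false in general: for $t=1$ condition $(i)$ is vacuous, the projection onto the second factor is a $\mathbb{Z}_1$-invariant map for every $i$, and indeed $L_{\overline{n}}(1)=S^{2n_1+1}\times\cdots\times S^{2n_r+1}$ splits completely regardless of parity. Your proposed repair via an Euler-class obstruction for a $\mathbb{C}^{n_i+1}$-bundle over $L^3(t)$ also cannot work: the obstruction would live in $H^{2n_i+2}(L^3(t))$, which vanishes for $n_i\geq1$ because the base is $3$-dimensional, so it detects nothing. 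What you flag as ``the main obstacle'' is a phantom; once that material is dropped, your argument is complete and coincides with the paper's.
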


As in Example~\ref{paran10}, this immediately leads to additional examples of spaces $\L_{\n}(t)$ which are parallelizable. We will not bother to elaborate on this as the parallelizability property for any $\L_{\n}(t)$ will be characterized in Section~\ref{mnfdppts} in terms of a simple arithmetical condition.

\medskip
The matrices in~(\ref{tresmatrices}) are given in~\cite[Case $s=3$ in Subsection~2.5]{Eck} as one of the two (up to equivalence) irreducible solutions to the unitary Hurwitz-Radon ($2\times 2$)-matrix equations. However, the techniques in~\cite{Eck} do not seem to be usable for producing \invariant maps $S^{2a+1}\times S^{2b+1}\to S^{2b+1}$ with $a\geq2$. It is hoped that the above discussions encourage a systematic study  of (the existence and) the algebro-geometric properties of \invariant maps.

\bigskip
We next turn to the decomposition of complex-projective and lens product spaces after a suspension. The argument in our case can be streamlined since, unlike the situation for real projective product spaces in~\cite{DDa}, we are able to work directly with integral cohomology. In fact, the decomposition arises from a partial generalization (given by the first homotopy equivalence in Corollary~\ref{decomthom} below) of the well known fact that, for $\overline{n}=(n_1)$ and $k>0$, the Thom complex $\hspace{.5mm}T\left(k\gamma_{\overline{n}}(t)\right)$ is homeomorphic to a stunted space:
\begin{equation}\label{clastun}
T\left(k\gamma_{\overline{n}}(t)\right)\cong\L_{(n_1+k)}(t)\left/\,\L_{(k-1)}(t)\right..
\end{equation}

\begin{lema}\label{ThomDec}
Let $k>0$ and $\overline{n}=(n_1,\ldots,n_r)$ with $n_1\leq\cdots\leq n_r$. Let $|\overline{n}|=\sum n_i$. There is a map 
\begin{equation}\label{mapfnt}
f_{\overline{n},t}\hspace{.3mm}\colon\Sigma^{r-1}T\left(k\gamma_{\overline{n}}(t)\right)\to\L_{(|\overline{n}|+k+r-1)}(t)\left/\,\L_{(|\overline{n}|-n_1+k+r-2)}(t)\right.
\end{equation}
which in integral cohomology induces a monomorphism with image given by the $r-1$ suspension of $H^*(\L_{(n_1)}(t);\mathbb{Z})\cdot x_2 \cdots x_r\cdot U_{k\gamma_{\overline{n}}(t)}$.
\end{lema}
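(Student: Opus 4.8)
The plan is to construct $f_{\overline{n},t}$ by an explicit combination of cellular-type collapse maps, exploiting the cofibration structure from the proof of Theorem~\ref{additive} together with the stunted-space identification~(\ref{clastun}). First I would set up notation: write $\overline{n}'=(n_1,\dots,n_{r-1})$, so that $\L_{\overline{n}}(t)=S((n_r+1)\gamma_{\overline{n}'}(t))$ and the Thom space $T(k\gamma_{\overline{n}}(t))$ fits (by~(\ref{spherebundle})) into the picture of sphere bundles over $\L_{\overline{n}'}(t)$. The key structural input is that, by iterating the cofibration~(\ref{Cof}) with $p$ a retraction, the Thom space $T(k\gamma_{\overline{n}}(t))$ receives, after $r-1$ suspensions, a map onto the Thom space of the \emph{single-variable} bundle $T((k+n_2+\dots+n_r+r-1)\gamma_{(n_1)}(t))$ — this is the natural geometric carrier of the class $x_2\cdots x_r\cdot U_{k\gamma_{\overline{n}}(t)}$, whose total degree equals that of $U_{(k+|\overline{n}|-n_1+r-1)\gamma_{(n_1)}(t)}$. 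Then~(\ref{clastun}) identifies that single-variable Thom space with the stunted space $\L_{(|\overline{n}|+k+r-1)}(t)/\L_{(|\overline{n}|-n_1+k+r-2)}(t)$ appearing as the target of~(\ref{mapfnt}).

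The second main step is to produce the map realizing this collapse. I would proceed by induction on $r$. For $r=1$ the map is (a desuspension of) the homeomorphism~(\ref{clastun}). For the inductive step, apply the cofibration~(\ref{Cof}): $\L_{\overline{n}}(t)\xrightarrow{p}\L_{\overline{n}'}(t)\xrightarrow{i}T((n_r+1)\gamma_{\overline{n}'}(t))$, and recall from the discussion following Theorem~\ref{additive} that $p^*$ embeds $H^*(\L_{\overline{n}'}(t))$ as the submodule generated by those $x_i$ kept in $\overline{n}'$. The Thom class $U_{k\gamma_{\overline{n}}(t)}$ is the restriction under $p$ of $U_{k\gamma_{\overline{n}'}(t)}$ (pullback of bundles along $p$, using~(\ref{pullbackttprimaprima})); combining the Thom isomorphism with the factor $x_r$ allows one to replace $k\gamma_{\overline{n}}(t)$ over $\L_{\overline{n}}(t)$ by $(k+n_r)\gamma_{\overline{n}'}(t)$ over $\L_{\overline{n}'}(t)$ at the price of one suspension — concretely, $\Sigma T(k\gamma_{\overline{n}}(t))$ maps (via a collapse built from $i$ and the sphere-bundle structure) to $T((k+n_r)\gamma_{\overline{n}'}(t))$, inducing on cohomology the isomorphism that carries $\Sigma(\text{–})\cdot x_r\cdot U_{k\gamma_{\overline{n}}(t)}$ to the corresponding class over $\L_{\overline{n}'}(t)$. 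Iterating $r-1$ times (each step absorbing one $x_i$ into the bundle and costing one suspension) lands in $\Sigma^0 T((k+n_2+\dots+n_r)\gamma_{(n_1)}(t))$, at which point~(\ref{clastun}) finishes the construction of $f_{\overline{n},t}$.

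For the cohomology statement, the monomorphism claim follows because at each stage the relevant map induces (by the Thom isomorphism and the splitting~(\ref{splitses})) an isomorphism onto the designated summand, so the composite is injective with the stated image; the image is $\Sigma^{r-1}\bigl(H^*(\L_{(n_1)}(t);\mathbb{Z})\cdot x_2\cdots x_r\cdot U_{k\gamma_{\overline{n}}(t)}\bigr)$ essentially by construction, since we have only transported this submodule through Thom isomorphisms. One should double-check the dimension bookkeeping: $x_i$ has degree $2n_i+1$, so $x_2\cdots x_r$ contributes degree $2(|\overline{n}|-n_1)+(r-1)$, the Thom class $U_{k\gamma_{\overline{n}}(t)}$ contributes $2k$, and after peeling off $r-1$ suspensions one matches the bottom cell $2(|\overline{n}|-n_1+k+r-1)+1$ of the stunted target — consistent with~(\ref{clastun}) applied to $(n_1)$ with bundle-multiplicity $k+|\overline{n}|-n_1+r-1$.

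The main obstacle I anticipate is not the cohomological computation, which is forced once the maps are in place, but rather \emph{the precise geometric construction of the collapse map at each inductive step in a way compatible with the suspension coordinates} — i.e.\ verifying that the Thom space $\Sigma T(k\gamma_{\overline{n}}(t))$ genuinely maps to $T((k+n_r)\gamma_{\overline{n}'}(t))$ and not merely that their cohomologies are abstractly related. This is where one must use that $p\colon\L_{\overline{n}}(t)\to\L_{\overline{n}'}(t)$ is a sphere bundle with a section (the retraction), so that its mapping cone and the Thom space $T((n_r+1)\gamma_{\overline{n}'}(t))$ are related by a stable splitting; carefully threading the Thom class $U_{k\gamma_{\overline{n}}(t)}$ through this splitting, and keeping track of which suspension coordinate is consumed, is the delicate point. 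A clean way to organize it is to note that $\L_{(n_1,\dots,n_r,k)}(t)=S((k+1)\gamma_{\overline{n}}(t))$ by~(\ref{spherebundle}), reducing everything to comparing stunted lens product spaces, and then to invoke the functoriality of~(\ref{clastun}) under the projections $p$ of the Preliminaries.
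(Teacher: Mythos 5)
Your construction is correct in outline, but it is a genuinely different route from the one in the paper. The paper does not assemble $f_{\overline{n},t}$ from homotopy-theoretic splittings at all: it writes down a single explicit $S^1$-equivariant formula
\[
g\left(t_2,\ldots,t_r,y_1,y_2,\ldots,y_r,x\right)=\left(\tfrac{M}{L}(t_2y_2,\ldots,t_ry_r,x),\ \sqrt{1-M^2}\,y_1\right)
\]
on $[-1,1]^{r-1}\times S^{2n_1+1}\times\cdots\times S^{2n_r+1}\times D^{2k}$ (with $L$ the norm and $M$ the max of $|t_2|,\ldots,|t_r|,\|x\|$), checks that it is equivariant and descends to a map of the stated quotients which is a homeomorphism on the interiors of the top cells, and then deduces the full cohomological behavior by naturality in $n_1$ and in $t$. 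Your route instead iterates a one-step splitting $\Sigma T(k\gamma_{\overline{n}}(t))\simeq T((k+n_r+1)\gamma_{\overline{m}}(t))\vee\Sigma T(k\gamma_{\overline{m}}(t))$ with $\overline{m}=(n_1,\ldots,n_{r-1})$, projects onto the first summand at each stage, and finishes with~(\ref{clastun}). This does work and is not circular: the one-step splitting needs only the cofibration~(\ref{Cof}) twisted by $k\gamma_{\overline{m}}(t)$ together with the section of $p$ --- exactly the argument the paper uses for~(\ref{wedfre}) --- and does not presuppose the lemma. The trade-off is that your proof essentially establishes the $\sigma=\{2,\ldots,r\}$ wedge summand of Corollary~\ref{decomthom} first, whereas the paper's explicit formula lets it \emph{deduce} that corollary from the lemma; the paper's map is also canonical, while yours depends on inverting a wedge equivalence. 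The delicate point you flag --- that the cofiber of $T(p)\colon T(k\gamma_{\overline{n}}(t))\to T(k\gamma_{\overline{m}}(t))$ really is $T((k+n_r+1)\gamma_{\overline{m}}(t))$ --- is the standard relative Thom space cofibration obtained from $S(F)\subset D(F)$ with $F=(n_r+1)\gamma_{\overline{m}}(t)$, twisted by $E=k\gamma_{\overline{m}}(t)$, using $\L_{\overline{n}}(t)=S((n_r+1)\gamma_{\overline{m}}(t))$; you should state and use it explicitly, since it is the only nontrivial geometric input in your argument.

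One bookkeeping correction: the multiplicity gained per step is $n_r+1$, not $n_r$ (degree count: $1+(2n_r+1)=2(n_r+1)$), so a single suspension of $T(k\gamma_{\overline{n}}(t))$ projects to $T((k+n_r+1)\gamma_{\overline{m}}(t))$, and $r-1$ iterations land in $T((k+|\overline{n}|-n_1+r-1)\gamma_{(n_1)}(t))$. Your first and third paragraphs use this correct total, but your middle paragraph drops the $+1$ at each step ($k+n_r$ and $k+n_2+\cdots+n_r$); with the corrected count,~(\ref{clastun}) produces exactly the stunted target of~(\ref{mapfnt}).
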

\begin{proof}
Use complex coordinates for unit spheres and discs $S^{2\ell-1}\subset D^{2\ell}\subset\mathbb{C}^\ell$ to define a map $g\colon [-1,1]^{r-1}\times S^{2n_1+1}\times\cdots\times S^{2n_r+1}\times D^{2k}\to D^{2(|\overline{n}|+k+r)}$ by
\begin{equation}\label{rollo}
g\left(t_2,\ldots, t_r,y_1,y_2,\ldots, y_r,x\right)=\left(\frac{M}{L}(t_2y_2,\ldots,t_{r}y_r,x), \sqrt{1-M^2}\,y_1\right)
\end{equation}
where $L=||(t_2y_2,\ldots,t_{r}y_r,x)||$ and $M=\max\left\{|t_2|,\ldots,|t_r|,||x||\right\}$. (Note that continuity forces the map to take the value $(0,\ldots,0,y_1)$ for $L=0$.) Since $||y_i||=1$, the right hand side of~(\ref{rollo}) lands in $S^{2(|\overline{n}|+k+r)-1}$, and in fact in $S^{2(|\overline{n}|-n_1+k+r-1)-1}$ when $M=1$. Further, $g$ is clearly equivariant with respect to the diagonal $S^1$-action (defined to be trivial on the cube $[-1,1]^{r-1}$). This yields a map $f_{\overline{n},t}$ as in~(\ref{mapfnt}) which, by direct verification, is a homeomorphim on the interior of the top cells. Therefore $f_{\overline{n},t}$ induces an isomorphism on the top dimension cohomology. Note that, for finite $t$, this top cell appears in one dimension higher than in the case $t=\infty$. Then the full behavior of $f_{\overline{n},t}$ in cohomology follows, for $t=\infty$, from the naturality of the definition of $g$  with respect to decreasing values of $n_1$ and, for finite $t$, from the naturality of $g$ with respect to the canonical projection $\pi$ in~(\ref{pullbackttprima}).
\end{proof}

A standard application of the Freudenthal suspension theorem shows that the space on the right hand side of~(\ref{mapfnt}), as well as  $f_{\overline{n},t}$ itself, desuspend $r-1$ times. We get a map
$$
\overline{f}_{\overline{n},t}\hspace{.3mm}\colon T\left(k\gamma_{\overline{n}}(t)\right)\to\Sigma^{1-r}\!\left(\L_{(|\overline{n}|+k+r-1)}(t)\left/\,\L_{(|\overline{n}|-n_1+k+r-2)}(t)\right.\right)
$$
between simply connected spaces such that $H^*(\overline{f}_{\overline{n},t};\mathbb{Z})$ is injective with $$\mbox{Im}(H^*(\overline{f}_{\overline{n},t};\mathbb{Z}))=H^*(\L_{(n_1)}(t);\mathbb{Z})\cdot x_2 \cdots x_r\cdot U_{k\gamma_{\overline{n}}(t)}.$$
In these conditions, for each subset $\sigma\subseteq\{2,\ldots,r\}$, set $r_\sigma=\mbox{card}(\sigma)+1$ and let $\overline{n}_\sigma$ be the $r_\sigma$-tuple obtained by removing from $\overline{n}$ those coordinates $n_i$ with $i\in\{2,\ldots,r\}-\sigma$. 

\begin{corolario}\label{decomthom}
There are homotopy equivalences
$$\hspace{.3mm}\Sigma\, T(k\gamma_{\overline{n}}(t))\simeq\bigvee_{\sigma\subseteq\{2,\ldots,r\}}\Sigma^{2-r_\sigma}\!\left(
\L_{(|\overline{n}_\sigma|+k+r_\sigma-1)}(t)\left/\,\L_{(|\overline{n}_\sigma|-n_1+k+r_\sigma-2)}(t)\right.\rule{-1.5mm}{4mm}\right)
$$
and, by setting $\L_{(-1)}(t)$ to be the base point,
$$\Sigma\L_{\overline{n}}(t)\simeq\bigvee_{\sigma\subseteq\{2,\ldots,r\}}\Sigma^{2-r_\sigma}\!\left(
\L_{(|\overline{n}_\sigma|+r_\sigma-1)}(t)\left/\,\L_{(|\overline{n}_\sigma|-n_1+r_\sigma-2)}(t)\right.\rule{-1.5mm}{4mm}\right).$$
\end{corolario}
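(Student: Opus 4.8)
The plan is to write down, for every subset $\sigma\subseteq\{2,\ldots,r\}$, an explicit map of $\Sigma T(k\gamma_{\overline{n}}(t))$ onto the $\sigma$-th wedge summand, to glue these maps together using the co-$H$ comultiplication carried by the suspension, and to recognize the resulting map as a homotopy equivalence by computing its effect on cohomology and invoking Whitehead's theorem. I treat both displayed equivalences at once: upon adopting the conventions $\L_{(-1)}(t)=\ast$ and $T(0\cdot\gamma_{\overline{n}}(t))=\L_{\overline{n}}(t)$ (the reduced space), the second equivalence is literally the $k=0$ instance of the first, and the proof of Lemma~\ref{ThomDec} together with the desuspension remark following it go through verbatim in that case (one replaces the disc factor $D^{2k}$ by a point).

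Fix $\sigma$ and let $p_\sigma\colon\L_{\overline{n}}(t)\to\L_{\overline{n}_\sigma}(t)$ be the projection that forgets the coordinates indexed by $\{2,\ldots,r\}\setminus\sigma$. By~(\ref{pullbackttprimaprima}) we have $k\gamma_{\overline{n}}(t)=p_\sigma^{*}(k\gamma_{\overline{n}_\sigma}(t))$, so $p_\sigma$ is covered by a fiberwise isomorphism of bundles and hence induces a map of Thom spaces $\Phi_\sigma\colon T(k\gamma_{\overline{n}}(t))\to T(k\gamma_{\overline{n}_\sigma}(t))$. Composing $\Phi_\sigma$ with the map $\overline{f}_{\overline{n}_\sigma,t}$ supplied (in its desuspended form) by Lemma~\ref{ThomDec} and then suspending once yields a map $g_\sigma\colon\Sigma T(k\gamma_{\overline{n}}(t))\to\Sigma^{2-r_\sigma}\big(\L_{(|\overline{n}_\sigma|+k+r_\sigma-1)}(t)/\L_{(|\overline{n}_\sigma|-n_1+k+r_\sigma-2)}(t)\big)$, that is, a map into the $\sigma$-th summand of the target wedge. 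Because $\Sigma T(k\gamma_{\overline{n}}(t))$ is a suspension, the iterated pinch map lets me assemble the $g_\sigma$, $\sigma\subseteq\{2,\ldots,r\}$, into a single map $G\colon\Sigma T(k\gamma_{\overline{n}}(t))\to\bigvee_\sigma\Sigma^{2-r_\sigma}\big(\L_{(|\overline{n}_\sigma|+k+r_\sigma-1)}(t)/\L_{(|\overline{n}_\sigma|-n_1+k+r_\sigma-2)}(t)\big)$.

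It remains to check that $G$ induces an isomorphism on reduced integral cohomology. The Thom isomorphism and Theorem~\ref{additive} express $\tilde H^{*}(T(k\gamma_{\overline{n}}(t));\mathbb{Z})$ as the internal direct sum, over $\sigma\subseteq\{2,\ldots,r\}$, of the submodules $H^*(\L_{(n_1)}(t);\mathbb{Z})\cdot\big(\prod_{i\in\sigma}x_i\big)\cdot U_{k\gamma_{\overline{n}}(t)}$. On the other hand $\Phi_\sigma^{*}$ carries $U_{k\gamma_{\overline{n}_\sigma}(t)}$ to $U_{k\gamma_{\overline{n}}(t)}$ and, by the functoriality of Theorem~\ref{additive} noted right after its statement (applicable since $p_\sigma$ keeps the first coordinate), is injective and sends the exterior generators of $H^*(\L_{\overline{n}_\sigma}(t))$ to the classes $x_i$ with $i\in\sigma$ and the polynomial generator to $z$; combining this with the description of $\mathrm{Im}(\overline{f}_{\overline{n}_\sigma,t}^{*})$ in Lemma~\ref{ThomDec}, one finds that $g_\sigma^{*}$ is injective with image exactly the suspension of the $\sigma$-summand $H^*(\L_{(n_1)}(t);\mathbb{Z})\cdot\big(\prod_{i\in\sigma}x_i\big)\cdot U_{k\gamma_{\overline{n}}(t)}$. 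Since the comultiplication of a co-$H$-space induces addition on reduced cohomology, $G^{*}=\sum_\sigma g_\sigma^{*}$, and as the images of the monomorphisms $g_\sigma^{*}$ are precisely the distinct direct summands of $\tilde H^{*}(\Sigma T(k\gamma_{\overline{n}}(t));\mathbb{Z})$, $G^{*}$ is an isomorphism. Finally $\Sigma T(k\gamma_{\overline{n}}(t))$ and the target wedge are simply connected and of finite type — the source is a suspension, and each summand of the target is a sufficiently highly desuspended stunted lens space, legitimate and highly connected by the Freudenthal argument already used after Lemma~\ref{ThomDec} — so $G$ is a homotopy equivalence by Whitehead's theorem.

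The step I expect to cost the most is the cohomological bookkeeping in the previous paragraph: one must verify that $\overline{f}_{\overline{n}_\sigma,t}\circ\Phi_\sigma$ detects in cohomology exactly the $H^*(\L_{(n_1)}(t))$-submodule generated by $\big(\prod_{i\in\sigma}x_i\big)U_{k\gamma_{\overline{n}}(t)}$ — neither more nor less — so that the $g_\sigma$ together account for complementary pieces of $\tilde H^{*}(\Sigma T(k\gamma_{\overline{n}}(t)))$; this hinges on the precise image of $p_\sigma^{*}$ recorded after Theorem~\ref{additive} and on the image computation in Lemma~\ref{ThomDec}. A secondary point to be explicit about is the legitimacy of the formal desuspensions $\Sigma^{2-r_\sigma}(-)$ when $r_\sigma\geq 3$, which is disposed of exactly as in the paragraph following Lemma~\ref{ThomDec}. (One can alternatively extract the building block $\Sigma\L_{\overline{n}}(t)\simeq\Sigma\L_{(n_1,\ldots,n_{r-1})}(t)\vee T((n_r+1)\gamma_{(n_1,\ldots,n_{r-1})}(t))$, and its analogue for Thom spaces, from the cofibration~(\ref{Cof}) — whose second map is null-homotopic precisely because the first is a retraction — but reconciling this with the stated wedge by induction meets a desuspension obstruction that the direct argument above sidesteps.)
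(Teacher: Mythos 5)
Your construction of the first equivalence is essentially the paper's own argument: the composites $\overline{f}_{\overline{n}_\sigma,t}\circ\pi_\sigma$, the cohomology bookkeeping via the image of $p_\sigma^*$ recorded after Theorem~\ref{additive}, assembly by the pinch map after one suspension, and Whitehead's theorem. That part is fine.

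The gap is in your treatment of the second equivalence. You assert that it is ``literally the $k=0$ instance'' and that Lemma~\ref{ThomDec} ``together with the desuspension remark following it go through verbatim'' at $k=0$. They do not. The desuspension of $f_{\overline{n}_\sigma,t}$ is obtained from Freudenthal, and the relevant inequality compares $\dim T(k\gamma_{\overline{n}_\sigma}(t))=2(|\overline{n}_\sigma|+k)+r_\sigma-\delta_t$ with twice the connectivity of the $(r_\sigma-1)$-fold desuspended stunted space; the margin in that inequality is exactly $2(|\overline{n}_\sigma|-2n_1)+2k+r_\sigma-3+\delta_t$, which is positive for all $k\geq1$ but becomes $-1$ when $k=0$, $t$ is finite, and $\overline{n}_\sigma=(n_1,n_1)$ (more generally whenever $r_\sigma=2$, $n_i=n_1$, $\delta_t=0$). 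Concretely: for finite $t$ one would need to desuspend a map $\Sigma(\L_{(n,n)}(t)_+)\to L_{(2n+1)}(t)/L_{(n)}(t)$ once, where the source has dimension $4n+3$ and the desuspended target is only $2n$-connected, so Freudenthal gives nothing and the top cell is precisely the problematic one. (There is also the smaller point that $T(0\cdot\gamma_{\overline{n}}(t))=\L_{\overline{n}}(t)_+$, not $\L_{\overline{n}}(t)$, and that $\L_{\overline{n}}(t)$ is not simply connected for finite $t$; your parenthetical ``(the reduced space)'' does not dispose of this.) Your closing parenthesis has the situation exactly backwards: the route through the splitting $\Sigma\L_{\overline{n}}(t)\simeq T((n_r+1)\gamma_{\overline{m}}(t))\vee\Sigma\L_{\overline{m}}(t)$ of~(\ref{wedfre}) --- which is the paper's actual proof --- meets \emph{no} desuspension obstruction, because every Thom space occurring in that induction has $k=n_j+1\geq1$, so Lemma~\ref{ThomDec} applies as stated; the unsuspended component maps $\overline{f}_{\overline{m},\sigma,t}$ are precomposed with the wedge projections and added using the co-H-structure already present on $\Sigma\L_{\overline{n}}(t)$, with no further (de)suspension needed. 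You should replace your $k=0$ reduction by this inductive argument (or else supply an independent proof that $f_{\overline{n},t}$ desuspends $r-1$ times when $k=0$, which is not ``standard Freudenthal'').
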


In this result, the second homotopy equivalence can of course be interpreted as the case with $k=0$ in the first one.
\begin{proof}
By~(\ref{clastun}), we can assume $r\geq2$. For each subset $\sigma\subseteq\{2,\ldots,r\}$ (including the empty set), the projection $p_\sigma\colon \L_{\overline{n}}\to\L_{\overline{n}_\sigma}$  yields on Thom complexes a map $\pi_\sigma\colon T(k\gamma_{\overline{n}}(t))\to T(k\gamma_{\overline{n}_\sigma}(t))$. The observations right after the statement of Theorem~\ref{additive} imply that the composition
$
\overline{f}_{\overline{n},\sigma,t}=\overline{f}_{\overline{n}_\sigma,t}\circ\pi_\sigma
$
is injective in integral cohomology with image given by $$H^*(\L_{(n_1)}(t);\mathbb{Z})\cdot \prod_{i\in\sigma}x_i\cdot U_{k\gamma_{\overline{n}}(t)}.$$

The first equivalence now follows from a standard homotopy argument (based on Whitehead's Theorem) where the suspensions $\Sigma\overline{f}_{\overline{n},\sigma,t}$ are taken as the needed wedge components. The second equivalence is obtained in a similar way, except that we need an additional consideration. Namely, there is a homotopy equivalence
\begin{equation}\label{wedfre}
\Sigma\L_{\overline{n}}(t)\simeq T((n_r+1)\gamma_{\overline{m}}(t))\vee\Sigma\L_{\overline{m}}(t)
\end{equation}
coming from the fact that, in the cofiber sequence~(\ref{Cof}), the map $p$ is a retraction, so that $i$ is null homotopic (recall $\overline{m}=(n_1,\ldots,n_{r-1})$). We can then precompose the wedge component maps discussed in the previous paragraph for the Thom complex summand, as well as the (inductively generated) wedge component maps on $\Sigma\L_{\overline{m}}(t)$, with the corresponding wedge projections in~(\ref{wedfre}). This time we do not have to suspend the resulting wedge component maps, since we already have the co-H-structure of $\Sigma\L_{\overline{n}}(t)$ to add them up.
\end{proof}

\section{Category and topological complexity}
As a result of a well-understood numeric characterization for the existence of $\mathbb{Z}_2$-invariant maps, \cite[Theorem~2.20(1)]{DDa} describes maximal cartesian factors by products of spheres holding inside real projective product spaces $\mathrm{P}_{(n_1,n_2,\ldots,n_r)}$. This yields partial information about sub-additive invariants---such as the Lusternik-Schnirelmann category (cat) and the Farber topological complexity (TC)---of $\mathrm{P}_{(n_1,n_2,\ldots,n_r)}$. In turn, this suggests the possibility that, for real projective product spaces, the two homotopy invariants mentioned above can be estimated in terms of~$n_1$ and~$r$ alone. Such a guess is formalized in~\cite[Theorem~3.8]{gggx}---even if no sphere splits off $\mathrm{P}_{(n_1,n_2,\ldots,n_r)}$ as a cartesian factor. In this section we adapt the viewpoint in~\cite{gggx} in order to obtain similar estimates for the category and the topological complexity of complex-projective and lens product spaces.

\medskip
Start by observing that Varadarajan's relation in~\cite{vara} for the categories of the spaces involved in the Borel-type fibration
\begin{equation}\label{bortyp}
S^{2n_2+1}\times\cdots\times S^{2n_r+1}\longrightarrow\L_{\overline{n}}(t)\stackrel{p}\longrightarrow\L_{(n_1)}(t)
\end{equation}
gives
$$
\cat(\L_{\overline{n}}(t))\leq r(\cat(\L_{(n_1)}(t))+1)-1
$$
and, in particular,
\begin{equation}\label{estuno}
\TC(\L_{\overline{n}}(t))\leq2r(\cat(\L_{(n_1)}(t))+1)-2.
\end{equation}
The right hand side in~(\ref{estuno}) is explicit:
$$\cat(\L_{(n_1)}(t))=\begin{cases}2n_1+1,&t<\infty;\\n_1,&t=\infty\end{cases}$$
(see for instance~\cite[Remark~2.46]{clot}).

\medskip
The main result in this short section is:
\begin{theorem}\label{linref}
$
\TC(\L_{\overline{n}}(t))\leq r\left(1+\TC(\L_{(n_1)}(t))\right)-1.
$
\end{theorem}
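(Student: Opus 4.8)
The plan is to exploit the retraction structure of the Borel-type fibration~(\ref{bortyp}) together with a TC-version of Varadarajan's inequality, rather than the crude estimate in~(\ref{estuno}) that loses a factor of two. The key point is that $p\colon\L_{\overline n}(t)\to\L_{(n_1)}(t)$ admits a section $j$ (as noted in Section~\ref{Int}, when the first coordinate is kept), and the fiber $F=S^{2n_2+1}\times\cdots\times S^{2n_r+1}$ is a product of $r-1$ spheres, which has the particularly simple property that $\TC(F)\le 2$ (indeed $\TC$ of a single odd sphere is $1$, and $\TC$ of a product of $r-1$ of them is at most $r-1$, but more to the point each factor has a homotopy-commutative co-H or H structure one can leverage). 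I would first recall the product-type inequality for fibrations with a section: if $F\to E\xrightarrow{p} B$ is a fibration admitting a section, then $\TC(E)\le \TC(B)+ \dim$-type correction coming from the fiber, in a form due to work building on Varadarajan — concretely, one has $\TC(E)+1 \le (\TC(B)+1)(\cat(F)+1)$ or a sharper relative version. The cleanest route is to iterate over the $r-1$ sphere factors one at a time.

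Second, I would set up the induction on $r$. The base case $r=1$ is the tautology $\TC(\L_{(n_1)}(t))\le\TC(\L_{(n_1)}(t))$. For the inductive step, write $\overline m=(n_1,\dots,n_{r-1})$ and consider the sphere bundle $\L_{\overline n}(t)=S((n_r+1)\gamma_{\overline m}(t))\to\L_{\overline m}(t)$ from~(\ref{spherebundle}); by~(\ref{wedfre}) and the surrounding discussion the projection $p\colon\L_{\overline n}(t)\to\L_{\overline m}(t)$ is a retraction. Applying the one-step fibration inequality to this $S^{2n_r+1}$-bundle with a section gives $\TC(\L_{\overline n}(t)) \le \TC(\L_{\overline m}(t)) + (1+\TC(\L_{(n_1)}(t)))$, where the increment $1+\TC(\L_{(n_1)}(t))$ must be produced by a careful analysis of how the single odd sphere fiber contributes relative to the base. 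Combining with the inductive hypothesis $\TC(\L_{\overline m}(t))\le (r-1)(1+\TC(\L_{(n_1)}(t)))-1$ yields the claimed bound $\TC(\L_{\overline n}(t))\le r(1+\TC(\L_{(n_1)}(t)))-1$.

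The main obstacle is producing the \emph{correct} one-step increment. A naive application of the sectioned-fibration TC inequality gives a multiplicative increment governed by $\cat(F)$, not by $\TC(\L_{(n_1)}(t))$, and that is exactly the lossy route~(\ref{estuno}). The trick — following~\cite{gggx} in the real case — is that $\L_{\overline n}(t)\times\L_{\overline n}(t)$ fibers over $\L_{(n_1)}(t)\times\L_{(n_1)}(t)$, and one can cover the latter by $\TC(\L_{(n_1)}(t))+1$ open sets over each of which the path-fibration of $\L_{\overline n}(t)$ admits a sectioned reduction; over each such set the remaining obstruction is controlled by a product of spheres with a section, which is sectionable in one step because an odd sphere is an H-space (its free-path fibration has a section). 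So I would: (1) pull back a categorical-type open cover realizing $\TC(\L_{(n_1)}(t))$ along $p\times p$; (2) over each member, use the section $j$ and the H-space structure on the fiber spheres to build explicit motion planners, picking up one extra open set per sphere factor; (3) count the total number of open sets as $(\TC(\L_{(n_1)}(t))+1)\cdot r$, hence $\TC(\L_{\overline n}(t))\le r(\TC(\L_{(n_1)}(t))+1)-1$. The delicate bookkeeping is step~(2): verifying that a single additional open set per sphere suffices, which is where the homotopy-commutativity/H-space property of odd spheres and the existence of the section $j$ are both essential.
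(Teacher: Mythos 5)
Your overall strategy---pulling back an optimal cover of $\L_{(n_1)}(t)\times\L_{(n_1)}(t)$ along $p\times p$ and then handling the sphere fibers cheaply---is the right one (it is the strategy of~\cite{gggx}, which the paper also follows), but the mechanism you propose for the crucial step does not work as stated. You justify the cheap handling of each sphere fiber by ``the H-space structure on the fiber spheres'' and the parenthetical claim that ``an odd sphere is an H-space.'' This is false: among odd spheres only $S^1$, $S^3$ and $S^7$ are H-spaces, so for general $n_i$ there is no such structure to leverage. What is actually true, and what is actually needed, is that $S^{2n+1}$ admits a two-rule motion planner; but more importantly the planner must be \emph{equivariant} for the $\mathbb{Z}_t$- (indeed $S^1$-) action, because the spheres sit inside $\L_{\n}(t)$ as fibers of a twisted bundle and non-equivariant local rules on the fibers neither descend to the quotient nor glue over the base. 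This equivariant statement, $\TC_{\mathbb{Z}_t}(S^{2n+1})=1$, is precisely the paper's Lemma~\ref{stcs}, proved via the nowhere-zero $S^1$-equivariant vector field $A\mapsto iA$; it is the one genuinely new ingredient in the paper's proof, and your proposal never identifies it. (A smaller but related error: you assert $\TC(F)\le 2$ for $F$ a product of $r-1$ odd spheres, whereas in fact $\TC(F)=r-1$.)

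The second gap is the bookkeeping you yourself flag as ``delicate.'' Both the one-step increment $\TC(\L_{\n}(t))\le\TC(\L_{\m}(t))+\bigl(1+\TC(\L_{(n_1)}(t))\bigr)$ and the global count $\bigl(\TC(\L_{(n_1)}(t))+1\bigr)\cdot r$ are asserted rather than derived, and neither follows from a standard sectioned-fibration inequality (a naive product count over the $r-1$ fiber spheres would give a factor of $2^{r-1}$, while the additive genus product formula would give a far stronger bound that is not what is being claimed). The paper obtains exactly this count by combining the product inequality for equivariant sectional category from \cite[Theorem~6.21]{cgstc} with \cite[Theorem~4.2]{gggx}, fed by Lemma~\ref{stcs}. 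To repair your argument you would need either to reprove those two results or to cite them, and in either case to replace the H-space input by the equivariant motion planner of Lemma~\ref{stcs}.
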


Since $\TC\leq2\hspace{.3mm}\cat$, this improves on~(\ref{estuno}) by at least $r-1$. Slightly larger improvements would be expected for suitably small values of the $t$ (see~\cite{y1,y3,y2}).

\medskip
Theorem~\ref{linref} follows from \cite[Theorem~6.21]{cgstc} (where unreduced notation for Schwarz' genus is used),~\cite[Theorem~4.2]{gggx}, and the following extension of~\cite[Example 6.15]{cgstc}:

\begin{lema}\label{stcs}
For $1\leq t\leq\infty$, Colman-Grant's $\mathbb{Z}_t$-equivariant topological complexity of an odd sphere is $1$.
\end{lema}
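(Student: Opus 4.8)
The plan is to exhibit an explicit equivariant motion planner on an odd sphere $S^{2\ell+1}$, carried by a trivializing open cover indexed by two sets, so that Colman--Grant's $\mathbb{Z}_t$-equivariant sectional category of the free-path fibration is at most $1$. The key point is that $S^{2\ell+1}$, viewed as the unit sphere in $\mathbb{C}^{\ell+1}$, carries a free $S^1$-action (hence a free $\mathbb{Z}_t$-action for every finite $t$), and one can transport the classical two-set motion planner on a sphere through this structure while respecting the group action.

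First I would recall the setup: for a $G$-space $X$, the $G$-equivariant topological complexity is the reduced Schwarz genus of the fibration $X^I\to X\times X$ in the category of $G$-spaces, where $G$ acts diagonally; a $G$-equivariant local section over a $G$-invariant open set $U\subseteq X\times X$ is a $G$-map $U\to X^I$ splitting the endpoint projection. So I must produce a $\mathbb{Z}_t$-invariant open cover $X\times X=U_0\cup U_1$ together with $\mathbb{Z}_t$-equivariant partial motion planners $s_j\colon U_j\to X^I$. Take $U_0=\{(x,y): y\neq -x\}$ and $U_1=\{(x,y): y\neq x\}$ (antipode and diagonal deleted, respectively); both are clearly $S^1$-invariant, hence $\mathbb{Z}_t$-invariant, and they cover $S^{2\ell+1}\times S^{2\ell+1}$. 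On $U_0$, move $x$ to $y$ along the unique minimizing geodesic (great-circle arc), which is well defined and continuous since $y\neq-x$; this rule is $O(2\ell+2)$-equivariant, in particular $S^1$- and $\mathbb{Z}_t$-equivariant, because geodesics are preserved by isometries. For $U_1$ the standard trick is to first push $x$ to $-y$ along a geodesic and then concatenate — but this needs $-y\neq -x$, i.e.\ $y\neq x$, which is exactly the defining condition of $U_1$; again the construction is isometry-equivariant, hence $\mathbb{Z}_t$-equivariant. Thus the $\mathbb{Z}_t$-equivariant TC is $\leq 1$. For the reverse inequality, I would note that $\TC_{\mathbb{Z}_t}\ge\TC$ after forgetting the action (or simply that a sphere is not contractible, so the genus is positive), giving the value $1$; for $t=\infty$ the same argument with $G=S^1$ works verbatim, which also recovers \cite[Example~6.15]{cgstc}.

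The main obstacle I anticipate is purely bookkeeping about \emph{which} $\mathbb{Z}_t$-equivariant notion of TC is in force and making sure the two-set cover I use is genuinely $\mathbb{Z}_t$-invariant and that the partial sections are strictly $\mathbb{Z}_t$-\emph{equivariant} (not merely invariant on the base): the geodesic and the ``via the antipode'' constructions are manifestly equivariant under the full orthogonal/unitary group, so this is clean, but one should double-check that concatenation and reparametrization of paths commute with the group action on $X^I$ (they do, since $G$ acts pointwise on path values and trivially on the interval $I$). A secondary subtlety is the edge case $\ell=0$, i.e.\ $S^1$ itself with the rotation $\mathbb{Z}_t$-action: here the same two-arc planner works, so no separate treatment is needed. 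Finally, I would remark that the statement is exactly what feeds into Theorem~\ref{linref} via \cite[Theorem~6.21]{cgstc} and \cite[Theorem~4.2]{gggx}, so no stronger bound is required.
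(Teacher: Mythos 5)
Your overall strategy (the two-domain equivariant planner on $U_0=\{(A,B):A\neq-B\}$ and $U_1=\{(A,B):A\neq B\}$, plus the lower bound $\TC\leq\TC_{\mathbb{Z}_t}$) is exactly the paper's, and your treatment of $U_0$ is fine. But your rule on $U_1$ has a genuine gap. You propose to ``first push $x$ to $-y$ along a geodesic and then concatenate'': the first leg is well defined since $y\neq x$, but the second leg is then a path from $-y$ to $y$, a pair of \emph{antipodal} points, for which there is no canonical shortest geodesic. Making this leg continuous in $y$ requires a continuous choice of great semicircle, i.e.\ a nowhere-vanishing tangent vector field on the sphere --- this is precisely where odd-dimensionality enters, and you never supply it. Your assertion that the $U_1$ rule is ``isometry-equivariant, hence $\mathbb{Z}_t$-equivariant'' cannot be right as stated: no nowhere-zero vector field on $S^{2\ell+1}$ is preserved by all of $O(2\ell+2)$, and if the construction really needed no such choice it would apply verbatim to even spheres, contradicting $\TC(S^{2n})=2$.

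The paper closes this gap by taking $v(A)=iA$ (complex multiplication by the imaginary unit), routing the path from $A$ to $-A$ along the semicircle determined by $v(A)$ and then from $-A$ to $B$ by the shortest geodesic (legal since $B\neq A$). The field $v$ is $S^1$-equivariant, hence $\mathbb{Z}_t$-equivariant for all $t\leq\infty$, and this is the one equivariance check that actually has content. Your proof becomes correct once you insert this (or the analogous $v(-y)$-semicircle for your ordering of the legs) and replace the appeal to full isometry-equivariance by equivariance under $\mathbb{Z}_t\subseteq S^1\subseteq U(\ell+1)$.
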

\begin{proof}
Consider the system of local motion planners for $S^{2n+1}$ with domains
\begin{eqnarray*}
U_0&=&\{(A,B)\in S^{2n+1}\times S^{2n+1}\mid A\neq -B\},\\U_1&=&\{(A,B)\in S^{2n+1}\times S^{2n+1}\mid A\neq B\},
\end{eqnarray*}
and with local rules $s_0$ and $s_1$ given so that $s_0(A,B)$ is the shortest geodesic path from $A$ to $B$ (traveled at constant velocity), and $s_1(A,B)$ is obtained by concatenating
\begin{itemize}
\item the path from $A$ to $-A$ along the great circle in the direction determined by $v(A)$ and
\item  the path from $-A$ to $B$ along the shortest geodesic. 
\end{itemize}
Here $v\colon S^{2n+1}\to S^{2n+1}$ is the vector field given by complex multiplication by the imaginary unit $i$. Since $v$ is $S^1$-equivariant, it is readily checked that this is a $\mathbb{Z}_t$-invariant system of local motion planners. The result then follow since $1=\TC(S^{2n+1})\leq\TC_{\mathbb{Z}_t}(S^{2n+1})\leq1$.
\end{proof}

\section{Immersions, (stable) span, and (stable) parallelizability}\label{mnfdppts}
In what follows we set $\mathbb{C}_{\overline{n}}=\mathbb{C}^{n_1+1}\times\cdots\times\mathbb{C}^{n_r+1}=\mathbb{C}^{|\overline{n}|+r}$ and $S_{\overline{n}}=S^{2n_1+1}\times\cdots\times S^{2n_r+1}$ where, as usual, $\overline{n}=(n_1,\ldots,n_r)$ and $|\overline{n}|=\sum n_i$. The key result in this section is the identification of the stable isomorphism type of the tangent bundle of $\L_{\n}(t)$.

\begin{proposition}\label{tgtetodot}
For any $t\leq\infty$, the tangent bundle $\tau(\L_{\n}(t))$ is stably equivalent to $(|\n|+r)\gamma_{\n}(t)$.
\end{proposition}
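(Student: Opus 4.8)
The plan is to compute the tangent bundle of $\L_{\n}(t)$ directly from its description as a quotient of $S_{\overline{n}}$ by the (free) diagonal action of $\mathbb{Z}_t\subseteq S^1$, imitating the classical computation of the tangent bundle of a lens space or of $\L^n$. First I would observe that the projection $q\colon S_{\overline{n}}\to\L_{\n}(t)$ is a principal $\mathbb{Z}_t$-bundle (or an $S^1$-bundle when $t=\infty$), so that $q^*\tau(\L_{\n}(t))\oplus(\text{bundle along the fibers})\cong\tau(S_{\overline{n}})$. When $t$ is finite the fibers are discrete, so $q^*\tau(\L_{\n}(t))\cong\tau(S_{\overline{n}})$; when $t=\infty$ there is one extra trivial line from the $S^1$-direction. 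In either case $\tau(S_{\overline{n}})$ is stably trivial, being a product of spheres, so $q^*\tau(\L_{\n}(t))$ is stably trivial. This is consistent with the claim because $q^*\gamma_{\n}(t)$ is the trivial complex line bundle: indeed the total space of $q^*(k\gamma_{\n}(t))$ is $S_{\overline{n}}\times\mathbb{C}^k$ by~(\ref{totalspace}) pulled back along $q$. So the statement cannot be proved by pulling back to $S_{\overline{n}}$ alone; I need to work downstairs.

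The standard device is this. Consider the bundle $\eta$ over $\L_{\n}(t)$ obtained as the quotient of $S_{\overline{n}}\times\mathbb{C}_{\overline{n}}$ by the diagonal $\mathbb{Z}_t$-action $(\x,z)\sim(\lambda\x,\lambda z)$; by~(\ref{totalspace}) with $\overline{n}$ in place of $k$-many trivial summands, $\eta\cong(|\n|+r)\gamma_{\n}(t)$, since $\mathbb{C}_{\overline{n}}=\mathbb{C}^{|\overline{n}|+r}$ and the $\mathbb{Z}_t$-action on the $\mathbb{C}^{|\overline{n}|+r}$ factor is scalar multiplication by $\lambda$. Now inside $\eta$ sits the subbundle $\nu$ whose fiber over $[\x]$ consists of the classes $[\x,z]$ with $z$ in the real line spanned by $\x$ inside $\mathbb{C}_{\overline{n}}$... more precisely, restrict to the sub-product-of-spheres picture: write $\mathbb{C}_{\overline{n}}=\bigoplus_i\mathbb{C}^{n_i+1}$ and note that the tangent space of $S^{2n_i+1}$ at $y_i$, as a real subspace of $\mathbb{C}^{n_i+1}$, is the orthogonal complement of the real line $\mathbb{R}y_i$. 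Assembling these, $\tau(S_{\overline{n}})$ together with the $r$ real "radial" lines $\mathbb{R}y_i$ gives the trivial $\mathbb{R}$-bundle $\underline{\mathbb{R}^{2|\overline{n}|+2r}}=\underline{r(\mathbb{C}_{\overline{n}})}$. The diagonal $S^1$-action on $S_{\overline{n}}$ has one more tangent direction, the "angular" vector field $(\x)\mapsto(i y_1,\ldots,i y_r)$, which is $\mathbb{Z}_t$-invariant and descends; quotienting, one gets: $\tau(\L_{\n}(t))\oplus\xi\cong r(\eta)=r\big((|\n|+r)\gamma_{\n}(t)\big)$, where $\xi$ is the sum of the "radial" contributions modulo what the $S^1$-direction kills. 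The key bookkeeping is that the $r$ radial lines $\mathbb{R}y_i$ span, $\mathbb{Z}_t$-equivariantly, a subbundle of $\underline{r(\mathbb{C}_{\overline{n}})}$ that descends to a bundle isomorphic to the realification of a sum of copies of $\gamma_{\n}(t)$, and that one real trivial summand (resp.\ no summand, for finite $t$) comes from the angular direction. Carefully done, this yields $\tau(\L_{\n}(t))\oplus\underline{\mathbb{R}^{a}}\cong r\big((|\n|+r)\gamma_{\n}(t)\big)\oplus\underline{\mathbb{R}^{b}}$ with $a-b$ accounting for the one- or zero-dimensional discrepancy, hence $\tau(\L_{\n}(t))$ is stably $(|\n|+r)\gamma_{\n}(t)$ (stably, realification is harmless since $\gamma_{\n}(t)$ is a complex line bundle and $r(\gamma)$ is what enters $\tau$; the statement is about stable equivalence of the underlying real bundles, with $(|\n|+r)\gamma_{\n}(t)$ read as its realification).

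An alternative, cleaner route avoids the product-of-spheres bookkeeping: use the iterated sphere-bundle structure. By~(\ref{spherebundle}), $\L_{(n_1,\ldots,n_r)}(t)=S\big((n_r+1)\gamma_{\m}(t)\big)$ with $\m=(n_1,\ldots,n_{r-1})$, so there is the standard stable bundle relation for a sphere bundle $S(\zeta)\to B$: $\tau(S(\zeta))\oplus\underline{\mathbb{R}}\cong p^*\tau(B)\oplus p^*\zeta$. Taking $\zeta=(n_r+1)\gamma_{\m}(t)$ and using $p^*\gamma_{\m}(t)=\gamma_{\n}(t)$ from~(\ref{pullbackttprimaprima}) (and that $p$ here is the obvious projection), induction on $r$ reduces everything to the base case $r=1$, i.e.\ the classical fact that $\tau(L^{2n_1+1}(t))$, resp.\ $\tau(\L^{n_1})$, is stably $(n_1+1)\gamma_{(n_1)}(t)$—which in turn follows from the Euler-sequence / quotient-of-sphere computation recalled above in the one-factor case. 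I would present this inductive version as the main argument, with the direct quotient computation sketched for the base case. \emph{The main obstacle} is getting the trivial-summand count exactly right—keeping track of the single extra $S^1$-direction in the $t=\infty$ case versus its absence for finite $t$, and making sure the realification conventions are stated so that "$(|\n|+r)\gamma_{\n}(t)$" unambiguously means the same real bundle in both the statement and the proof; none of the homotopy-theoretic input is delicate, but the bundle-theoretic accounting must be done with care.
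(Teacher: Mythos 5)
Your inductive route is sound and is a genuinely different presentation from the paper's: the paper proves the finite-$t$ case in one shot by exhibiting $\tau(L_{\n}(t))\oplus r\varepsilon_{\mathbb{R}}\cong(|\n|+r)\gamma_{\n}(t)$ directly, via the explicit map $(\x,\overline{z},\overline{u})\mapsto(\x,z_1+u_1x_1,\ldots,z_r+u_rx_r)$ on the quotient of $\{(\x,\overline{z})\in S_{\n}\times\mathbb{C}_{\n}:x_i\perp z_i\}$, and handles $t=\infty$ separately by invoking Ozeki--Uchida's lemma on principal circle actions on products of spheres. Your iteration of the sphere-bundle relation $\tau(S(\zeta))\oplus\underline{\mathbb{R}}\cong p^*\tau(B)\oplus p^*\zeta$ applied to~(\ref{spherebundle}), together with~(\ref{pullbackttprimaprima}), cleanly reduces everything to the classical $r=1$ case and gives $(|\m|+r-1)+(n_r+1)=|\n|+r$ copies of $\gamma_{\n}(t)$; this trades the paper's one-step formula for an induction, but avoids the equivariant bookkeeping entirely and treats finite $t$ and $t=\infty$ uniformly. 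Both are correct.

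One sentence in your sketch of the direct computation is wrong and, taken literally, would derail that version of the argument: the $r$ radial lines $\mathbb{R}y_i$ do \emph{not} descend to realifications of copies of $\gamma_{\n}(t)$. Each descends to a \emph{trivial} real line bundle on the quotient, because $[\overline{y}]\mapsto[\overline{y},(0,\ldots,y_i,\ldots,0)]$ is a well-defined nowhere-zero section (replacing $\overline{y}$ by $\lambda\overline{y}$ multiplies both slots by $\lambda$). This is precisely the point: the radial directions contribute the $r\varepsilon_{\mathbb{R}}$ on the tangent side of~(\ref{tgt}), so that the full $(|\n|+r)\gamma_{\n}(t)$ survives on the other side. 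If they contributed copies of $\gamma_{\n}(t)$ instead, you would be cancelling them against the right-hand side and would land on the wrong multiple. Your final displayed identity (with the extra trivial line from the angular direction when $t=\infty$) is the correct one; just delete the claim about the radial subbundle being a sum of $\gamma$'s.
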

\begin{proof}
The argument for $t=2$ in~\cite[(3.5)]{DDa} generalizes to any finite $t$: It is elementary to note that the tangent bundle $\tau(L_{\overline{n}}(t))$ is the quotient of 
$$
\left\{\rule{0mm}{3.5mm}(\overline{x},\overline{z})=(x_1,\ldots,x_r,z_1,\ldots,z_r)\in S_{\overline{n}}\times\mathbb{C}_{\overline{n}}\::\: x_i \perp z_i \:\:\forall i
\right\}
$$
by the identifications $(\overline{x},\overline{z})\sim\lambda (\overline{x},\overline{z})$ where $\overline{x}\in S_{\overline{n}}$, $\overline{z}\in\mathbb{C}_{\overline{n}}$, and $\lambda\in\mathbb{Z}_t$. Together with~(\ref{totalspace}) this yields an isomorphism
\begin{equation}\label{tgt}
\tau(L_{\overline{n}}(t))\oplus r\varepsilon_{\mathbb{R}}\stackrel{\cong}\longrightarrow\left(|\overline{n}|+r\right)\gamma_{\overline{n}}(t)
\end{equation}
induced by $(\overline{x},\overline{z},\overline{u})\mapsto(\overline{x},z_1+u_1x_1,\ldots,z_r+u_rx_r)$, where $\varepsilon_{\mathbb{F}}$ stands for a trivial $\mathbb{F}$-vector line bundle, and $\overline{u}=(u_1,\ldots,u_r)\in\mathbb{R}^r$. 

\smallskip
On the other hand, for $t=\infty$, we only need to note that the normal bundle to the embedding $S_{\n}\subset\mathbb{C}_{\n}$ is trivial, and that the diagonal $S^1$-action is compatible with the trivialization. Then, the desired conclusion follows directly from~(\ref{totalspace}) and~\cite[Lemma~3.1]{MR0353348}.
\end{proof}

In particular, the stable normal bundle of $\L_{\overline{n}}(t)$ is $-\left(|\overline{n}|+r\right)\gamma_{\n}(t)$ which, in view of~(\ref{pullbackttprimaprima}) and~(\ref{pullbackttprimaprimaprima}), has the same geometric dimension (dg) as that of $-\left(|\overline{n}|+r\right)\gamma_{(n_1)}(t)$. Consequently, Hirsch's characterization of the Euclidean immersion dimension (imm) of manifolds, yields the ($t\leq\infty$)-extension of \cite[Theorem~3.4]{DDa} in Corollary~\ref{immt}(\ref{5.2.1}) below. An analogous use of Proposition~\ref{tgtetodot}, (\ref{pullbackttprimaprima}), and~(\ref{pullbackttprimaprimaprima}) yields Corollary~\ref{immt}(\ref{5.2.2}).

\begin{corolario}\label{immt}
$\L_{\overline{n}}(t)$ is an orientable closed smooth manifold with \emph{
\begin{enumerate}[(i)]
\item\label{5.2.1}\emph{$\imm(\L_{\overline{n}}(t))=2|\n|+r-\delta_t+\max\left\{1,\gd\left(-(|\overline{n}|+r)\gamma_{(n_1)}(t)\right)\right\}$.}
\item\label{5.2.2} $\stablespan(\L_{\n}(t))=\span\left((|\n|+r)\gamma_{(n_1)}(t)\right)-r-\delta_t$.
\end{enumerate}}

\noindent Here $\delta_t=0$ for finite $t$, while $\delta_{\infty}=1$, so that $\dim(\L_{\n}(t))=2|\n|+r-\delta_t$.
\end{corolario}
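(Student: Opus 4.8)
The strategy is to combine the stable tangent bundle computation of Proposition~\ref{tgtetodot} with Hirsch's immersion theorem and the standard bundle-theoretic formula for stable span, after first reducing every geometric-dimension (\gd) calculation over $\L_{\overline{n}}(t)$ to one over $\L_{(n_1)}(t)$. This reduction is the device already invoked for~(\ref{5.2.1}) and~(\ref{5.2.2}) in the paragraph preceding the corollary: recall the projection $p\colon\L_{\overline{n}}(t)\to\L_{(n_1)}(t)$ that erases all but the first coordinate, together with its section $j$ with $p\circ j=\mathrm{id}$, for which $p^*\gamma_{(n_1)}(t)=\gamma_{\overline{n}}(t)$ and $j^*\gamma_{\overline{n}}(t)=\gamma_{(n_1)}(t)$ by~(\ref{pullbackttprimaprima}) and~(\ref{pullbackttprimaprimaprima}). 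Since geometric dimension is monotone under pullback, and these two maps exhibit each of $\pm(|\overline{n}|+r)\gamma_{\overline{n}}(t)$ and $\pm(|\overline{n}|+r)\gamma_{(n_1)}(t)$ as a pullback of the other, these (virtual) bundles have equal geometric dimensions. I would also record at the outset that $\L_{\overline{n}}(t)$ is a closed smooth manifold, being the orbit space of the free smooth action of $\mathbb{Z}_t$ (or of $S^1$ when $t=\infty$) on the compact manifold $S_{\overline{n}}$, and that it is orientable because, by Proposition~\ref{tgtetodot}, $w_1(\tau(\L_{\overline{n}}(t)))=(|\overline{n}|+r)\,w_1(\gamma_{\overline{n}}(t))=0$, the complex line bundle $\gamma_{\overline{n}}(t)$ having vanishing first Stiefel--Whitney class.

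To prove~(\ref{5.2.1}), note that by Proposition~\ref{tgtetodot} the stable normal bundle $\nu$ of $\L_{\overline{n}}(t)$ is $-(|\overline{n}|+r)\gamma_{\overline{n}}(t)$, which by the reduction above has geometric dimension $\gd\bigl(-(|\overline{n}|+r)\gamma_{(n_1)}(t)\bigr)$. Hirsch's immersion theorem states that a closed smooth manifold $M$ of dimension $d\geq1$ immerses in $\mathbb{R}^{d+k}$, for $k\geq1$, exactly when its stable normal bundle $\nu_M$ has geometric dimension at most $k$; since such an $M$ never immerses in $\mathbb{R}^{d}$, this gives $\imm(M)=d+\max\{1,\gd(\nu_M)\}$. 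Substituting $d=\dim\L_{\overline{n}}(t)=2|\overline{n}|+r-\delta_t$ yields~(\ref{5.2.1}).

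To prove~(\ref{5.2.2}), use that the stable span of a closed $d$-manifold $M$ equals $d-\gd(\tau(M))$, the defect of the geometric dimension of the stable tangent bundle. By Proposition~\ref{tgtetodot} and the reduction, $\gd(\tau(\L_{\overline{n}}(t)))=\gd\bigl((|\overline{n}|+r)\gamma_{(n_1)}(t)\bigr)=:g$. Now $(|\overline{n}|+r)\gamma_{(n_1)}(t)$ is an honest real bundle whose rank $2(|\overline{n}|+r)$ strictly exceeds $\dim\L_{(n_1)}(t)=2n_1+1-\delta_t$ (here $r\geq1$ is used), so a reduction of its stable class to a bundle of rank $g$ is realized by an honest splitting off of a trivial summand of rank $2(|\overline{n}|+r)-g$, and no larger trivial summand can split off; hence $\span\bigl((|\overline{n}|+r)\gamma_{(n_1)}(t)\bigr)=2(|\overline{n}|+r)-g$. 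Therefore
$$\stablespan(\L_{\overline{n}}(t))=\bigl(2|\overline{n}|+r-\delta_t\bigr)-g=\span\bigl((|\overline{n}|+r)\gamma_{(n_1)}(t)\bigr)-r-\delta_t,$$
which is~(\ref{5.2.2}).

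The only delicate points are the two standard bundle-theoretic inputs: the range hypothesis $\operatorname{rank}>\dim$ that permits trading the span of an honest bundle for the geometric dimension of its stable class, and the restriction $k\geq1$ in Hirsch's theorem, which is precisely what produces the $\max\{1,\cdot\}$ in~(\ref{5.2.1}) (in the borderline case the stable normal bundle is stably trivial, so $\L_{\overline{n}}(t)$ immerses in codimension $1$ but not $0$). Everything else is a direct substitution once Proposition~\ref{tgtetodot} is in hand.
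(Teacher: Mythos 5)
Your proposal is correct and follows the same route as the paper: Proposition~\ref{tgtetodot} identifies the stable tangent (hence normal) bundle, the pullback relations~(\ref{pullbackttprimaprima}) and~(\ref{pullbackttprimaprimaprima}) transfer the geometric-dimension computation from $\L_{\overline{n}}(t)$ down to $\L_{(n_1)}(t)$, and Hirsch's theorem (resp.\ the stable-range identity $\span(\xi)=\mathrm{rank}(\xi)-\gd([\xi])$) delivers~(\ref{5.2.1}) (resp.~(\ref{5.2.2})). You have merely made explicit the details the paper leaves to the reader, including the $\max\{1,\cdot\}$ coming from the non-immersibility of a closed manifold in its own dimension.
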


As noted in~\cite[Corollary~3.7]{DDa}, Corollary~\ref{immt}(\ref{5.2.1}) readily yields instances of manifolds $\L_{\n}(t)$ immersing in Eucldiean spaces in such a way that the ratio $\mbox{codim}/\hspace{-.8mm}\dim$ is arbitrarily small. Likewise, the second item in Corollary~\ref{immt} yields instances of manifolds $\L_{\n}(t)$ for which the ratio $\stablespan/\hspace{-.8mm}\dim$ is arbitrarily close to 1.

\medskip
Corollary~\ref{immt} translates manifold properties for $\L_{\n}(t)$ in terms of the (largely unknown) geometric dimension of (stable) multiples of the Hopf bundles over standard complex projective and lens spaces. Yet, in what follows we take advantage of Corollary~\ref{immt} in order to give explicit characterizations for the parallelizability and stable parallelizability of the complex-projective and lens spaces---just as in the case of their standard counterparts.

\medskip
Corollary~\ref{immt}(\ref{5.2.2}) reduces the task of characterizing the stable parallelizability of $\L_{\n}(t)$ to that of determining the $\widetilde{\mathrm{KO}}$-order of the Hopf bundle over usual complex projective and lens spaces. In view of Example~\ref{paran10}, we only need to state the case for $n_1\geq1$. For a prime $p$ and a positive integer~$t$, let $\nu_p(t)$ denote the exponent in the largest $p$-power dividing $t$. Then define the integer $\sigma(n_1,t)$ so that $\nu_p(\sigma(n_1,t))=0$ unless $t$ is divisible by $p$, in which case $\nu_p(\sigma(n_1,t))$ is determined by the following formula, whose $j$-th instance applies only if all previous instances do not apply:
$$
\nu_p(\sigma(n_1,t))=\begin{cases}
n_1+1,&\mbox{if $p=2$, $\nu_2(t)=1$, and $n_1\not\equiv3\bmod4$;}\\
n_1,&\mbox{if $p=2$, and $\max\{\nu_2(t),n_1\}=1$;}\\
\nu_2(t)+n_1-1,&\mbox{if $p=2\,$ and $n_1\equiv0\bmod2$};\\
\nu_2(t)+n_1-2,&\mbox{if $p=2$};\\
\nu_p(t)+\left[\frac{n_1-2}{p-1}\right],&\mbox{if $n_1\geq2$};\\
$0$.&
\end{cases}
$$
\begin{corolario}\label{stabpara}
Assume $n_1\geq1$. The manifold $\L_{\n}(\infty)$ is stably parallelizable if and only if $n_1=1$ and $|\n|+r$ is even. For finite $t$, $\L_{\n}(t)$ is stably parallelizable if and only if $|\n|+r$ is divisible by $\sigma(n_1,t)$.
\end{corolario}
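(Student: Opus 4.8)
The plan is to convert the statement into a computation of a $\widetilde{\mathrm{KO}}$-order and then feed in the classical description of the real $K$-theory of standard complex-projective and lens spaces.

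First recall that a closed smooth manifold $M$ is stably parallelizable if and only if $\stablespan(M)=\dim(M)$. By Corollary~\ref{immt}(\ref{5.2.2}), and since $\dim(\L_{\n}(t))=2|\n|+r-\delta_t$, this amounts to the equality $\span\!\left((|\n|+r)\gamma_{(n_1)}(t)\right)=2(|\n|+r)$. Now $(|\n|+r)\gamma_{(n_1)}(t)$ is a real bundle of rank exactly $2(|\n|+r)$, so its span equals its rank precisely when that bundle is trivial. Moreover $2(|\n|+r)\ge 2n_1+2$ exceeds $\dim(\L_{(n_1)}(t))$, so a real bundle of this rank over $\L_{(n_1)}(t)$ is trivial as soon as it is stably trivial. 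Therefore $\L_{\n}(t)$ is stably parallelizable if and only if $(|\n|+r)\bigl(r(\gamma_{(n_1)}(t))-2\bigr)=0$ in $\widetilde{\mathrm{KO}}(\L_{(n_1)}(t))$, i.e.\ if and only if $|\n|+r$ is divisible by the $\widetilde{\mathrm{KO}}$-order of the reduced realified bundle $r(\gamma_{(n_1)}(t))-2$. (This is the exact analogue of the classical characterizations of stable parallelizability of $\L^{n_1}$ and $L^{2n_1+1}(t)$ themselves, where $|\n|+r$ is replaced by $n_1+1$.)

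It then remains to identify that order, using the remark closing Section~\ref{Int} that $r(\gamma_{(n_1)}(t))$ is the realification of the Hopf line bundle $H$ over $\L_{(n_1)}(t)$. For $t=\infty$: when $n_1=1$ we have $\L_{(1)}(\infty)=S^2$ and $\widetilde{\mathrm{KO}}(S^2)=\mathbb{Z}/2$ is generated by $r(H)-2$, which therefore has order $2$; when $n_1\ge 2$, the Chern character of the complexification of $r(H)-2=r(H-1)$ is $(e^{z}-1)+(e^{-z}-1)=2\cosh z-2=z^2+\tfrac{1}{12}z^4+\cdots$, whose leading term $z^2$ is nonzero in $H^4(\L^{n_1};\mathbb{Q})$, so $r(H)-2$ has infinite order. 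Hence $\L_{\n}(\infty)$ is stably parallelizable exactly when $n_1=1$ and $|\n|+r$ is even, as claimed (the case $n_1=0$ being already covered by Example~\ref{paran10}).

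For finite $t$ one has $\L_{(n_1)}(t)=L^{2n_1+1}(t)$, and the $\widetilde{\mathrm{KO}}$-order of the realified reduced Hopf bundle over a standard lens space is a classical computation (due to Kambe for prime-power $t$, and assembled in general from the $p$-primary orders over the primes $p$ dividing $t$). Matching the known $p$-local orders branch by branch against the piecewise definition of $\nu_p(\sigma(n_1,t))$ then gives that $\L_{\n}(t)$ is stably parallelizable if and only if $\sigma(n_1,t)$ divides $|\n|+r$. I expect the bookkeeping in this last step to be the main obstacle: one must check that the element of $\widetilde{\mathrm{KO}}(L^{2n_1+1}(t))$ singled out above is precisely the realified Hopf bundle and not some other power of it, and reconcile each case in the formula for $\nu_p(\sigma(n_1,t))$ with the literature---especially the delicate $2$-primary cases, where the hypothesis $\nu_2(t)=1$ and the dependence on $n_1\bmod 4$ encode the real-projective contributions sitting inside the lens space.
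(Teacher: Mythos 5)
Your reduction is exactly the one the paper uses: via Corollary~\ref{immt}(\ref{5.2.2}), stable parallelizability of $\L_{\n}(t)$ becomes divisibility of $|\n|+r$ by the $\widetilde{\mathrm{KO}}$-order of the (realified, reduced) Hopf bundle over $\L_{(n_1)}(t)$, and that order is then read off from the literature --- the paper cites Adams--Walker for $t=\infty$ (where your Chern-character computation is a fine self-contained substitute), Mahammed--Piccinini--Suter for the splitting into $p$-primary components, and Adams, Kobayashi--Sugawara, and Kawaguchi--Sugawara for the prime-power orders encoded in $\nu_p(\sigma(n_1,t))$. The branch-by-branch matching you defer is likewise handled in the paper only by these citations, so your proposal is essentially the paper's proof.
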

\begin{proof}
The case $t=\infty$ is straightforward: as shown in~\cite[Theorem~2.2(i)]{MR0171285}, the $\widetilde{\mathrm{KO}}$-order of the Hopf bundle over a standard complex projective space $\L^{n_1}$ is either 2 or infinite,
with the former case holding precisely for $n_1=1$. 

\smallskip
The required KO-information for a positive integer $t$ can be collected from work in the 70's. Let $t=\prod p_i^{e_i}$ be the primary decomposition of $t$. As shown in Theorem~1.6 of~\cite[Chapter~5]{MR592956}, the $p_i$-primary component of the $\widetilde{\mathrm{KO}}$-order of $\gamma_{(n_1)}(t)$ agrees with that of $\gamma_{(n_1)}(p_i^{e_i})$ (restricted to the $2n_1$-skeleton of its base, if $n_1\equiv0\bmod4$ and $p_i\neq2$, but this makes no difference since, in such a case, the two $\widetilde{\mathrm{KO}}$-groups differ only by a $\mathbb{Z}_2$ summand). There is a finite number of primes contributing since the $\widetilde{\mathrm{KO}}$-order of $\gamma_{(n_1)(t)}$ is relatively prime to any prime $p_i$ with $e_i=0$, for in fact $\gamma_{(n_1)(t)}$ pulls back trivially under the projection $S^{2n_1+1}=L^{2n_1+1}(1)\to L^{2n_1+1}(t)$. The conclusion follows since $p_i^{\nu_{p_i}(\sigma(n_1,t))}$ is the $\widetilde{\mathrm{KO}}$-order of $\gamma_{(n_1)}(p_i^{\nu_{p_i}(t)})$, as described explicitly in~\cite[Theorem~7.4]{AdamsVFS} for $p_i^{e_i}=2$, in~\cite[Theorem~1.4]{MR485765} for $p_i=2$ with $e_i\geq2$, and in~\cite[Theorem~1.1]{MR0312496} for $p_i>2$.
\end{proof}

The parallelizability of $\L_{\n}(t)$ can now be characterized explicitly (Examples~\ref{parability}) using the results in~\cite{MR0200937} on the parallelizability of stably parallelizable manifolds. With this in mind, we next recall one of the relevant invariants in Bredon-Kosinski's work.

\begin{definition}\label{kerinv}
For a smooth manifold $M$ of dimension $n$, the Kervaire semi-characteristic $\chi^*(M)$ is given in terms of the Euler characteristic $\chi(M)$ and the $\mathbb{F}_2$-Betti numbers $\beta_j(M)$ of $M$ as
$$
\chi^*(M)=\begin{cases}
\frac{\chi(M)}{2},&\mbox{if $n\equiv0\bmod2$;}\\
\left[\sum_{i\geq0}\beta_{2i}(M)\right],&\mbox{if $n\equiv1\bmod2$,}
\end{cases}
$$
where $\left[\ell\right]$ denotes the congruence class modulo $2$ of the integer $\ell$.
\end{definition}

Both Euler and Kervaire characteristics can be read off directly in the description in Theorem~\ref{additive} of the $\mathbb{F}_2$-cohomology of $\L_{\n}(t)$.

\begin{lema}\label{axlrs}
For any $t\leq\infty$,
\begin{enumerate}[(i)]
\item\label{chi0} $
\chi(\L_{\n}(t))=\begin{cases}n_1+1,&r=1\mbox{ and }t=\infty;\\0,&\mbox{otherwise.}\end{cases}
$
\item\label{chi1} If $r-\delta_t$ is odd (i.e.~if $\dim(\L_{\n}(t))$ is odd),
$$
\chi^*(\L_{\n}(t))=\begin{cases}
\left[n_1+1\right],&\mbox{if }\hspace{.2mm}(r=1\mbox{ and }t\mbox{ is even})\mbox{ or }(r\leq2\mbox{ and }t=\infty);
\\\left[1\right],&\mbox{if }\hspace{1.3mm}r=1\mbox{ and }t\mbox{ is odd};\\\left[0\right],&\mbox{otherwise.}\end{cases}
$$
\end{enumerate}
\end{lema}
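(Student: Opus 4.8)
The plan is to read both invariants directly off the additive description of the mod~2 cohomology given by Theorem~\ref{additive}, namely
$H^*(\L_{\n}(t);\mathbb{F}_2)\cong H^*(\L_{(n_1)}(t);\mathbb{F}_2)\otimes\Lambda_{\mathbb{F}_2}[x_2,\dots,x_r]$
with $\dim(x_i)=2n_i+1$, together with the standard computations of $H^*(\L_{(n_1)}(t);\mathbb{F}_2)$ recorded in Examples~\ref{ringmod2} and~\ref{ringZ}. First I would record the Poincar\'e series of the first tensor factor: $H^*(\L^{n_1};\mathbb{F}_2)$ has total dimension $n_1+1$ concentrated in even degrees, while for finite $t$ the space $L^{2n_1+1}(t)$ has total $\mathbb{F}_2$-dimension $2(n_1+1)$, with $n_1+1$ classes in even degrees and $n_1+1$ in odd degrees (this holds uniformly in $t$, even $t$: the extra class $y$ of Example~\ref{ringZ} when $t$ is even simply replaces $\omega$; in all finite cases the even- and odd-degree Betti numbers each equal $n_1+1$). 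The exterior factor $\Lambda_{\mathbb{F}_2}[x_2,\dots,x_r]$ contributes, for each subset $S\subseteq\{2,\dots,r\}$, a one-dimensional space in degree $\sum_{i\in S}(2n_i+1)$, whose parity is the parity of $|S|$; thus it has $2^{r-2}$ classes in even degrees and $2^{r-2}$ in odd degrees when $r\geq2$, and a single even class when $r=1$.

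For part~(\ref{chi0}) I would compute $\chi=\sum_j(-1)^j\beta_j$ as the product of the Euler characteristics of the two tensor factors. The exterior factor $\Lambda_{\mathbb{F}_2}[x_2,\dots,x_r]$ has Euler characteristic $\prod_{i=2}^r(1-1)=0$ as soon as $r\geq2$, so $\chi(\L_{\n}(t))=0$ whenever $r\geq2$. When $r=1$ the exterior factor is trivial and $\chi(\L_{(n_1)}(t))$ equals $n_1+1$ for $t=\infty$ and $0$ for finite $t$ (even- and odd-degree Betti numbers coincide), giving exactly the stated formula. (Alternatively, for $r\geq2$ or finite $t$ one may invoke the vanishing of $\chi$ for any closed manifold admitting a free $S^1$- or $\mathbb{Z}_t$-type circle action, but the cohomological computation is cleaner and self-contained.)

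For part~(\ref{chi1}) the relevant quantity is $\sum_{i\geq0}\beta_{2i}(\L_{\n}(t))\bmod2$, the total $\mathbb{F}_2$-dimension of the even-degree cohomology, reduced mod~2. Here the ring structure is irrelevant; only the counting of even-degree basis elements matters. The hypothesis ``$r-\delta_t$ odd'' just ensures we are in the odd-dimensional case of Definition~\ref{kerinv}. Writing $E_1$ (resp.\ $O_1$) for the number of even- (resp.\ odd-) degree $\mathbb{F}_2$-basis elements of the first factor, and using that the exterior factor contributes $2^{r-2}$ even and $2^{r-2}$ odd classes when $r\geq2$, one gets (for $r\geq2$) a total even-degree dimension $2^{r-2}(E_1+O_1)$, which is even as soon as $r\geq3$, and equals $E_1+O_1$ when $r=2$; hence for $r\geq3$ the semi-characteristic vanishes, matching the ``otherwise'' clause, and for $r=2$ one gets $[E_1+O_1]$, which is $[0]$ for finite $t$ (since $E_1+O_1=2(n_1+1)$ is even) and $[2(n_1+1)]=[0]$ for $t=\infty$—wait, for $t=\infty$ with $r=2$ the first factor is $H^*(\L^{n_1};\mathbb{F}_2)$ with $E_1+O_1=n_1+1$, giving $[n_1+1]$, the stated first case. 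For $r=1$ the exterior factor is absent: for $t=\infty$ the even-degree dimension is $n_1+1$, giving $[n_1+1]$; for finite $t$ it is $n_1+1$ if $t$ is even (the class $y$ and its Bockstein-partners: precisely the $n_1+1$ classes $1,z,\dots,z^{n_1}$ in even degrees) giving $[n_1+1]$, and when $t$ is odd the odd-degree contribution comes only from $\omega$ in top degree $2n_1+1$ while the $n_1+1$ powers of $z$ sit in even degrees—hmm, that again gives $[n_1+1]$, not $[1]$. The one real subtlety, and the step I expect to be the main obstacle, is getting these small-$r$, small-torsion boundary counts exactly right: one must be careful that for odd finite $t$ the class $\omega$ is genuinely present (it is, by Example~\ref{ringmod2}, since an odd $t$ is not a zero divisor in $\mathbb{F}_2$), and one must double-check the parity bookkeeping in each listed case against Definition~\ref{kerinv}; I would organize this as a short finite case analysis on $(r,t)$ driven entirely by the tensor-product Poincar\'e series, with no geometric input beyond Theorem~\ref{additive} and Examples~\ref{ringmod2}–\ref{ringZ}.
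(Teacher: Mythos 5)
Your overall strategy is exactly the paper's: the paper offers no more proof than the remark that both invariants ``can be read off directly'' from the tensor-product description of the $\mathbb{F}_2$-cohomology in Theorem~\ref{additive}, which is what you do. Part~(\ref{chi0}) and the $r\geq 3$, $r=2$, and $t=\infty$ branches of part~(\ref{chi1}) are handled correctly by your Poincar\'e-series bookkeeping.

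There is, however, a genuine gap, and it is precisely the one you flag with your unresolved ``hmm'': your claim that for \emph{every} finite $t$ the mod~$2$ cohomology of $L^{2n_1+1}(t)$ has $n_1+1$ classes in even degrees and $n_1+1$ in odd degrees is false when $t$ is odd. In Example~\ref{ringmod2} with $h^*=H^*(-;\mathbb{F}_2)$ the $t$-series is $[t](z)=tz$, which equals $z$ when $t$ is odd; hence $z=0$ in $H^*(L^{2n_1+1}(t);\mathbb{F}_2)$ and all positive powers of $z$ vanish. (Equivalently: the integral cohomology of an odd-torsion lens space consists of odd-torsion groups in positive even degrees, which die upon reduction mod~$2$, so $L^{2n_1+1}(t)$ has the mod~$2$ cohomology of $S^{2n_1+1}$.) Thus for odd $t$ one has $E_1=O_1=1$, not $n_1+1$; the class $\omega$ survives but the ``$n_1+1$ powers of $z$'' do not. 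With this correction the $r=1$, $t$ odd case gives $\chi^*=[E_1]=[1]$, exactly the second branch of the statement, and the rest of your case analysis goes through unchanged (note also that for $r=2$ and finite $t$ the manifold is even-dimensional, so that branch of part~(\ref{chi1}) never arises and your $[E_1+O_1]$ computation there is moot). As written, your proposal would ``prove'' $[n_1+1]$ in the $r=1$, $t$ odd case, so the erroneous Betti count is not a cosmetic issue but the entire content of the even/odd distinction in $t$.
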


\begin{examples}\label{parability}{\em 
The Poincar\'e-Hopf theorem and Lemma~\ref{axlrs}(\ref{chi0}) imply that $\L_{\n}(t)$ admits a nowhere vanishing vector field if and only if
\begin{equation}\label{nsc}
\mbox{$r>1\;$ or $\;t<\infty$.}
\end{equation}
On the opposite side of the picture, we use Corollary~\ref{stabpara}, Lemma~\ref{axlrs} and Bredon-Kosinski's results in~\cite{MR0200937} to give the promised characterization of the existence of the {\it largest} possible number of linearly independent fields on $\L_{\n}(t)$. Namely, for $n_1\geq1$ (recall Example~\ref{paran10}),
$$
\mbox{$\L_{\n}(\infty)$ is parallelizable if and only if $n_1=1$, $r>1$, and $|\n|+r$ is even.}
$$
Likewise, for $t<\infty$, $n_1\geq1$, and $r>1$ (recall, say from~\cite[Fact~6.2]{MR2559639}, that the parallelizability of standard lens spaces is well understood), a stably parallelizable $\L_{\n}(t)$ (as characterized in Corollary~\ref{stabpara}) must actually be parallelizable.
}\end{examples}

Lastly we remark that, as noted in~\cite[Theorem~3.9]{DDa} for $t=2$, Lemma~\ref{axlrs}, \cite[Theorems~20.1 and~20.10]{MR611333}, and the easy-to-check fact that $\L_{\n}(t)$ is a Spin manifold if and only if either $n_1=0$ or $|\n|+r$ is even, yield:

\begin{proposition}\label{invts}
Assume $r>1$ or $t<\infty$. The equality $\span=\stablespan$ holds for $\L_{\n}(t)$ under either one of the following two conditions:{\em
\begin{enumerate}
\item {\em $r-\delta_t$ is even.}
\item {\em $\dim(\L_{\n}(t))\equiv3\bmod8$, $\chi^*(\L_{\n}(t))=0$, and either $n_1=0$ or $|\n|+r$ is even.}
\end{enumerate}}

\noindent
If the first and third conditions in $(2)$ hold, but $\chi^*(\L_{\n}(t))\neq0$, then in fact $\span(\L_{\n}(t))=3$.
\end{proposition}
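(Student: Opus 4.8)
The plan is to read Proposition~\ref{invts} off the general machinery comparing $\span$ with $\stablespan$ for a closed connected smooth manifold, once all the relevant manifold invariants of $\L_{\n}(t)$ have been recorded. Those invariants are already available: $\L_{\n}(t)$ is an orientable closed manifold (Corollary~\ref{immt}) of dimension $2|\n|+r-\delta_t$; it is Spin precisely when $n_1=0$ or $|\n|+r$ is even (the criterion recalled just before the statement); its Euler characteristic vanishes under the standing hypothesis $r>1$ or $t<\infty$, by Lemma~\ref{axlrs}(\ref{chi0}); and its Kervaire semi-characteristic is computed by Lemma~\ref{axlrs}(\ref{chi1}). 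The external input is \cite[Theorems~20.1 and~20.10]{MR611333}, which govern when $\span M=\stablespan M$ (and, when equality fails in the borderline dimensions, determine $\span M$ exactly) in terms of $\dim M\bmod 8$, the orientability/Spin-ness of $M$, and whether the obstructing semi-characteristic ($\chi$ in even dimensions, $\chi^*$ in odd ones) vanishes.

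With that in hand the proof is a short case check. For condition~(1), ``$r-\delta_t$ even'' is exactly the assertion that $\dim\L_{\n}(t)$ is even; since $r>1$ or $t<\infty$ rules out the exceptional pair $(r,t)=(1,\infty)$, Lemma~\ref{axlrs}(\ref{chi0}) gives $\chi(\L_{\n}(t))=0$, and \cite[Theorem~20.1]{MR611333} then yields $\span=\stablespan$. For condition~(2), the three hypotheses say precisely that $\dim\L_{\n}(t)\equiv3\bmod8$, that $\L_{\n}(t)$ is Spin (orientability being automatic), and that $\chi^*(\L_{\n}(t))=0$; this is the hypothesis set of \cite[Theorem~20.10]{MR611333}, whose conclusion is again $\span=\stablespan$. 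The final assertion is the complementary case of the same theorem: when $\dim\L_{\n}(t)\equiv3\bmod8$ and $\L_{\n}(t)$ is Spin but $\chi^*(\L_{\n}(t))\neq0$, Theorem~20.10 forces $\span(\L_{\n}(t))=3$. (Via Corollary~\ref{immt}(\ref{5.2.2}), each instance of $\span=\stablespan$ makes $\span(\L_{\n}(t))$ explicit in terms of $\span$ of a multiple of a Hopf bundle over a standard complex projective or lens space.)

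The one place that needs care is the translation of hypotheses rather than any computation: one must check that the orientability/Spin conditions, the exact residue of the dimension modulo $8$, and the choice of semi-characteristic appearing in \cite[Theorems~20.1, 20.10]{MR611333} line up exactly with what Lemma~\ref{axlrs} and the Spin criterion provide, and in particular that in the $t=\infty$ part of condition~(1) one really is in the case $r>1$, where $\chi(\L_{\n}(\infty))=0$, and not in the excluded case $r=1$, where $\chi(\L_{\n}(\infty))=n_1+1$ and the conclusion would fail. Once this dictionary is fixed there is nothing further to prove: all the substance is already contained in Lemma~\ref{axlrs}, Corollary~\ref{immt}, the Spin criterion, and the two cited theorems.
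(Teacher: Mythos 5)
Your proposal is correct and follows exactly the route the paper takes: the paper offers no separate proof beyond observing that the statement follows from Lemma~\ref{axlrs}, the Spin criterion ($n_1=0$ or $|\n|+r$ even), and Koschorke's Theorems~20.1 and~20.10, which is precisely the dictionary you set up and check case by case. Your extra remark that the standing hypothesis $r>1$ or $t<\infty$ is what forces $\chi(\L_{\n}(t))=0$ in condition~(1) is the right point of care and matches the paper's intent.
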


\bigskip\bigskip\small
\noindent{\sc Departamento de Matem\'aticas, Centro de Investigaci\'on y de Estudios Avanzados del IPN, M\'exico City 07000, M\'exico}

\noindent{\it E-mail address:} {\bf jesus@math.cinvestav.mx}

\bigskip\medskip

\noindent{\sc Colegio de Ciencia y Tecnolog\'ia, Universidad Aut\'onoma de la
Ciudad de M\'exico, Mexico City 09790, M\'exico}

\noindent{\it E-mail address}: {\bf maurilio.velasco.fuentes@uacm.edu.mx}

\end{document}